\theoremstyle{plain}
\newtheorem{theorem}{Theorem}[section]
\newtheorem{corollary}[theorem]{Corollary}
\newtheorem{definition}[theorem]{Definition}
\newtheorem{proposition}[theorem]{Proposition}
\newtheorem{lemma}[theorem]{Lemma}
\newtheorem{remark}[theorem]{Remark}
\newtheorem{example}{Example}
\numberwithin{equation}{section}
\begin{document}

\title{{\bf Hyperbolic solutions to\\Bernoulli's free boundary problem}}
\author{Antoine Henrot \and Michiaki Onodera}
\date{}
\maketitle

\begin{abstract}
\noindent
Bernoulli's free boundary problem is an overdetermined problem in which one seeks an annular domain such that the capacitary potential satisfies an extra boundary condition. 
There exist two different types of solutions called elliptic and hyperbolic solutions. 
Elliptic solutions are ``stable'' solutions and tractable by variational methods and maximum principles,  while hyperbolic solutions are ``unstable'' solutions of which the qualitative behavior is less known. 
We introduce a new implicit function theorem based on the parabolic maximal regularity, which is applicable to problems with loss of derivatives. 
Clarifying the spectral structure of the corresponding linearized operator by harmonic analysis, we prove the existence of foliated hyperbolic solutions as well as elliptic solutions in the same regularity class. 
\end{abstract}

\section{Introduction}
\label{section-introduction}

Let $\Omega$ be a bounded domain in $\mathbb{R}^n$ and $Q>0$ a given constant. 
Bernoulli's free boundary problem asks to find an open set $A\subset\Omega$ for which the following overdetermined problem is solvable: 
\begin{equation}
\label{bernoulli}
\left\{
\begin{aligned}
 -\Delta u&=0 && \textrm{in} \quad \Omega\setminus \overline{A},\\
 u&=0 && \textrm{on} \quad \partial\Omega,\\
 u&=1 && \textrm{on} \quad \partial A,\\
 \frac{\partial u}{\partial \nu}&=Q && \textrm{on} \quad \partial A,
\end{aligned}
\right.
\end{equation}
where $\nu$ is the unit outer normal vector with respect to the annular domain $\Omega\setminus\overline{A}$. 

A physical interpretation of $u$ is the stream potential of a stationary irrotational velocity field in the plane of an incompressible inviscid fluid which circulates around a bubble $A$ of air in a given container $\Omega$. 
The extra boundary condition $\partial_\nu u=|\nabla u|=Q$ is then derived from the Bernoulli's law. 
Equation \eqref{bernoulli} also arises in a shape optimization problem in which one wants to design the optimal shape of the insulation layer of an electronic cable such that the current leakage is minimized subject to a given amount of insulation material, where $u$ stands for the electrostatic potential and $\Omega$ is the cross-section of the cable with the insulation layer $\Omega\setminus\overline{A}$. 
In potential theory, $u$ is called the capacitary potential of $A$ in $\Omega$ if the first three equations in \eqref{bernoulli} are satisfied; and thus Bernoulli's free boundary problem is regarded as an inverse problem for the capacitary potential with equi-magnitude of force field on the free boundary $\partial A$. 
For other physical backgrounds, see Flucher and Rumpf \cite{flucher_rumpf-97} and references therein. 

\bigskip

The structure of solutions to Bernoulli's free boundary problem is illustrated by the simplest situation where $\Omega$ is the unit ball $\mathbb{B}=\mathbb{B}_1$. 
Here we denote by $\mathbb{B}_r$ the ball of radius $r>0$ with center at the origin. 

\begin{example}
\label{example-radial}
{\rm 
For $\Omega=\mathbb{B}$, it is known that solutions $A$ must be concentric balls (see Alessandrini \cite{alessandrini-92} and Reichel \cite{reichel-95}). 
The capacitary potential $u_r$ of $\mathbb{B}_r$ ($0<r<1$) and its normal derivative at $|x|=r$ are
\begin{equation*}
 u_r(x)=\left\{
 \begin{aligned}
  &\frac{\log |x|}{\log r} && (n=2),\\
  &\frac{|x|^{2-n}-1}{r^{2-n}-1} && (n\geq 3),
 \end{aligned}
 \right. \qquad \frac{\partial u_r}{\partial \nu}(r)=\left\{
 \begin{aligned}
  &-\frac{1}{r\log r} && (n=2),\\
  &\frac{n-2}{r(1-r^{n-2})} && (n\geq 3).
 \end{aligned}
 \right.
\end{equation*}
Thus, for given $Q>0$, we shall find $0<r<1$ such that $Q=Q(r):=\partial_\nu u_r(r)$, where $Q(r)$ is convex in $0<r<1$ and takes its minimum at
\begin{equation*}
 r=r_\ast:=\left\{
 \begin{aligned}
  &e^{-1} && (n=2),\\
  &(n-1)^{-1/(n-2)} && (n\geq 3). 
 \end{aligned}
 \right. 
\end{equation*}
Therefore, at the critical value $Q=Q_\ast:=Q(r_\ast)$, the problem \eqref{bernoulli} has a unique solution $A=\mathbb{B}_{r_\ast}$; while for $Q>Q_\ast$ there are two solutions $\mathbb{B}_{r_1(Q)}$ and $\mathbb{B}_{r_2(Q)}$ with $r_1(Q)<r_\ast<r_2(Q)$; and no solution for $Q<Q_\ast$. 
Moreover, as $Q\to\infty$, $\mathbb{B}_{r_1(Q)}$ shrinks to the single point $x=0$, while $\mathbb{B}_{r_2(Q)}$ approaches to $\Omega=\mathbb{B}$, and we have the foliation structure
\begin{equation*}
 \{0\}\cup\partial\mathbb{B}_{r_\ast}\cup\Bigg(\bigcup_{Q>Q_\ast}\partial\mathbb{B}_{r_1(Q)}\Bigg)\cup\Bigg(\bigcup_{Q>Q_\ast}\partial\mathbb{B}_{r_2(Q)}\Bigg)=\mathbb{B}. 
\end{equation*}
The shrinking solutions $\mathbb{B}_{r_1(Q)}$ are called hyperbolic and the expanding solutions $\mathbb{B}_{r_2(Q)}$ are called elliptic, as we shall define in Definition \ref{definition-elliptic}. 
}
\end{example}

One of the interesting questions is whether such a foliation structure of solutions appears for a general convex domain $\Omega$. 
Acker \cite{acker-89_1} proved that this is true for elliptic solutions, namely there is a family of expanding solutions $\{A(Q)\}_{Q>Q_0}$ for some $Q_0>0$ such that $\bigcup_{Q>Q_0}\partial A(Q)=\Omega\setminus K$ for some compact set $K\subset\Omega$. 
In fact, Cardaliaguet and Tahraoui \cite{cardaliaguet_tahraoui-02_2} proved that $K$ can be chosen as the closure of a solution $A(Q_\ast)$ with threshold value $Q_\ast>0$. 
The proof is based on the existence of ordered elliptic solutions $A(Q)$ due to the super and subsolution method and the continuous dependence of $A(Q)$ on $Q$, where the latter is deduced from Borell's inequality in convex geometry. 
However, these arguments fail for hyperbolic solutions, and it has been open if there exists a family of hyperbolic solutions shrinking to a single point in $\Omega$. 
In fact, it was conjectured by Flucher and Rumpf \cite{flucher_rumpf-97} that such a family exists with a non-degenerate harmonic center of $\Omega$ as the limit point. 

\bigskip

In general, the existence of a solution $A$ for given $\Omega$ and $Q$ can be proved by various methods including the super and subsolution method of Beurling \cite{beurling-57} and its generalizations by Caffarelli and Spruck \cite{caffarelli_spruck-82} and Henrot and Shahgholian \cite{henrot_shahgholian-00_1,henrot_shahgholian-00_2}, a variational method by Alt and Caffarelli \cite{alt_caffarelli-81}, with penalty term by Aguilera, Alt and Caffarelli \cite{aguilera_alt_caffarelli-86}, a constrained variational method by Flucher and Rumpf \cite{flucher_rumpf-97}, and the inverse function theorem of Nash and Moser by Hamilton \cite{hamilton-82}. 

Most of the results are concerned with a class of ``well-ordered'' solutions called elliptic solutions, where a solution $A$ to Bernoulli's free boundary problem \eqref{bernoulli} is called elliptic if, roughly speaking, the infinitesimal increase of the value of $Q>0$ makes the corresponding solution $A$ to expand (see Definition \ref{definition-elliptic}). 
Indeed, the super and subsolution method only allows one to construct elliptic solutions, since the method yields a solution $A$ as the union of all subsolutions, where $A_{\rm sub}\subset\Omega$ is called a subsolution if the capacitary potential $u$ of $A_{\rm sub}$ satisfies $\partial_\nu u\leq Q$ on $\partial A_{\rm sub}$; and hence for $\tilde{Q}>Q$ the corresponding solution $\tilde{A}$ must be larger than $A$. 
Thus this method cannot produce another type of solutions called hyperbolic solutions, for which the increase of $Q>0$ makes $A$ to shrink. 

Variational solutions $A=\{u<1\}$ with $u\in H_0^1(\Omega)$ constructed in \cite{alt_caffarelli-81} as minimizers of the functional
\begin{equation*}
 J(u):=\int_\Omega |\nabla u|^2\,dx+\int_{\{u<1\}}Q^2\,dx
\end{equation*}
are also elliptic, and the argument for their regularity estimate essentially relies on the local minimality of the solutions. 
Thus the method cannot directly apply to hyperbolic solutions which appear as saddle points. 

Flucher and Rumpf \cite{flucher_rumpf-97} adopted a constrained variational method in which one minimizes the capacity ${\rm Cap}\,_\Omega(A):=\int_{\Omega\setminus A}|\nabla u|^2\,dx$ among all domains $A$ of equal volume $|A|=C$, where $u$ is the capacitary potential of $A$ in $\Omega$. 
The Euler-Lagrange equation is \eqref{bernoulli} with a Lagrange multiplier $Q>0$. 
This method would produce even hyperbolic solutions. 
However, neither the continuous dependence of solution $A$ on $Q>0$, its non-degeneracy nor hyperbolicity is derivable from its construction. 

The inverse (or implicit) function theorem is also, in principle, able to handle hyperbolic solutions, but \eqref{bernoulli} has a regularity issue called ``loss of derivatives'' and this requires the Nash-Moser method as in \cite{hamilton-82}, for which one needs quantitative estimates between several function spaces which are only, to the best of our knowledge, available for elliptic solutions. 
Furthermore, the solutions $\{A(Q)\}_{|Q-Q_0|<\varepsilon}$ constructed by this method, in general, have lower regularity than the initial state $A(Q_0)$. 

\bigskip

The novelty of this paper is the introduction of an implicit function theorem based on the parabolic maximal regularity, which enables us to handle the loss of derivatives and, in particular, produces a locally foliated family of hyperbolic solutions $\{A(Q)\}_{Q_0\leq Q<Q+\varepsilon}$ in the same regularity class as the initial state $A(Q_0)$, provided that $A(Q_0)$ is initially known to be hyperbolic. 
The corresponding result for elliptic solutions is also true in the backward direction $\{A(Q)\}_{Q_0-\varepsilon<Q\leq Q_0}$. 
In fact, the result holds for variable $Q=Q(x,t)$; and thus one may assume $\Omega=\mathbb{B}$ if $\Omega$ is a planar simply-connected domain, by a conformal mapping $f:\mathbb{B}\to\Omega$. 

To state our main result precisely, we define the little H\"older space $h^{k+\alpha}(\overline{U})$ for a domain $U\subset\mathbb{R}^n$ as the closure of the Schwartz space $\mathcal{S}(\mathbb{R}^n)$ (restricted to $\overline{U}$) of rapidly decreasing functions in the H\"older space $C^{k+\alpha}(\overline{U})$. 
In the same manner, $h^{k+\alpha}(\Gamma)$ for a closed hypersurface $\Gamma$ is defined through its local coordinates. 
For a reference domain $A_0\subset\overline{A_0}\subset\Omega$ with smooth boundary $\partial A_0$ and $\rho\in h^{k+\alpha}(\partial A_0)$ with small norm, we define $A_\rho$ as the bounded domain enclosed by
\begin{equation*}
 \partial A_\rho:=\left\{\zeta+\rho(\zeta)\nu(\zeta) \mid \zeta\in\partial A_0\right\}, 
\end{equation*}
where $\nu(\zeta)$ is the unit outer normal vector to $\partial A_0$ with respect to $\Omega\setminus\overline{A_0}$. 
We will show the existence of smoothly varying solutions in the sense that
\begin{equation}
\label{regularity_condition}
 \rho\in C([0,T),h^{3+\alpha}(\partial A_0))\cap C^1([0,T),h^{2+\alpha}(\partial A_0)). 
\end{equation}
A solution $A_\rho$ to \eqref{bernoulli} for $Q=Q(x)$ is said to be non-degenerate if there is no other solution nearby $A_{\rho}$ for the same value $Q(x)$, i.e., the linearized equation has only the trivial solution (see Definition \ref{definition-nondegeneracy}). 
Moreover, if the linearized equation with positive boundary data, which corresponds to the increase of $Q$, yields a positive (negative) solution, then $A_{\rho}$ is called elliptic (hyperbolic) and monotone. 

\begin{theorem}
\label{theorem-local_foliation}
Let $\partial\Omega\in h^{2+\alpha}$ and $Q(x,t)\in h^{3+\alpha}(\overline{\Omega}\times[0,\infty))$ with $Q>0$, and let $A_{\rho_0}$ with small $\rho_0\in h^{3+\alpha}(\partial A_0)$ be a non-degenerate solution to \eqref{bernoulli} for $Q(\cdot,0)$. 
\begin{enumerate}[(A)]
\item If $A_{\rho_0}$ is hyperbolic, monotone and $\partial_tQ(x,t)>0$, then there is $T>0$ such that, for any $0\leq t<T$, \eqref{bernoulli} possesses a non-degenerate, hyperbolic and monotone solution $A(t)=A_{\rho(t)}$ for $Q(\cdot,t)$ satisfying \eqref{regularity_condition} and $\rho(0)=\rho_0$. 
\item If $A_{\rho_0}$ is elliptic, monotone and $\partial_tQ(x,t)<0$, then there is $T>0$ such that, for any $0\leq t<T$, \eqref{bernoulli} possesses a non-degenerate, elliptic and monotone solution $A(t)=A_{\rho(t)}$ for $Q(\cdot,t)$ satisfying \eqref{regularity_condition} and $\rho(0)=\rho_0$. 
\end{enumerate}
\end{theorem}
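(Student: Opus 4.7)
The plan is to recast \eqref{bernoulli} as a quasilinear Cauchy problem on the fixed reference surface $\partial A_0$ and to solve it by parabolic maximal regularity in the little H\"older scale, thereby circumventing the loss of one spatial derivative that obstructs the classical implicit function theorem. For $\rho$ close to $\rho_0$, let $u_\rho$ denote the capacitary potential of $A_\rho$ in $\Omega$, and set
\begin{equation*}
 F(\rho,Q):=(\partial_\nu u_\rho - Q)|_{\partial A_\rho}\circ\Phi_\rho, \qquad \Phi_\rho(\zeta):=\zeta+\rho(\zeta)\nu(\zeta).
\end{equation*}
Schauder theory makes $F$ a smooth map from a neighborhood of $(\rho_0,Q(\cdot,0))$ in $h^{3+\alpha}(\partial A_0)\times h^{3+\alpha}(\overline{\Omega})$ into $h^{2+\alpha}(\partial A_0)$, and \eqref{bernoulli} becomes the identity $F(\rho(t),Q(\cdot,t))=0$ for every $t\in[0,T)$.

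Next I would analyze the Fr\'echet derivative $L:=D_\rho F(\rho_0,Q(\cdot,0))$. By the shape derivative of the capacitary potential, $L$ is, up to multiplication by a positive smooth function, a first-order elliptic pseudodifferential operator on $\partial A_0$ built from the Dirichlet-to-Neumann map of the annulus $\Omega\setminus\overline{A_{\rho_0}}$. In particular $L:h^{3+\alpha}\to h^{2+\alpha}$ loses exactly one derivative, which is why the ordinary implicit function theorem cannot operate at the same H\"older level. Non-degeneracy means $L$ is an isomorphism, and the spectral decomposition announced in the abstract shows that $L$ is sectorial on $h^{2+\alpha}(\partial A_0)$ with natural domain $h^{3+\alpha}(\partial A_0)$; moreover, in the monotone elliptic (resp.\ hyperbolic) case its spectrum lies in a right (resp.\ left) half-plane, so that $L$ (resp.\ $-L$) generates an analytic semigroup on the little H\"older scale. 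This sign is exactly what distinguishes (A) from (B) and makes the upcoming evolution parabolic forward in $t$ in both.

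Rather than invert $F$ at fixed $t$, I differentiate the identity $F(\rho(t),Q(\cdot,t))=0$ in $t$ to obtain the quasilinear Cauchy problem
\begin{equation*}
 L(\rho(t))\,\partial_t\rho(t)=-D_QF(\rho(t),Q(\cdot,t))\,\partial_tQ(\cdot,t), \qquad \rho(0)=\rho_0,
\end{equation*}
which, under the sign assumption on $\partial_tQ$, is a well-posed parabolic problem of the correct orientation in both (A) and (B). The new implicit function theorem of the paper, based on parabolic maximal regularity in the little H\"older scale (in the Da~Prato--Grisvard tradition), then provides a unique short-time solution in precisely the class \eqref{regularity_condition}; the dependence of $L$ and $D_QF$ on $\rho$ is handled by a contraction closed by the maximal regularity estimate. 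Since $\frac{d}{dt}F(\rho(t),Q(\cdot,t))\equiv 0$ and vanishes at $t=0$, the constructed $\rho(t)$ actually satisfies $F\equiv 0$ and therefore solves \eqref{bernoulli}; the qualitative properties (non-degeneracy, monotonicity, and the hyperbolic or elliptic character) persist on a possibly shorter interval by continuity of the spectrum of $L(\rho(t))$ in $t$.

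The hardest step is the maximal regularity one: one must verify uniform sectorial resolvent estimates for $L(\rho)$ on the little H\"older scale with constants depending only mildly on $\|\rho\|_{h^{3+\alpha}}$, so that the contraction argument closes in the class \eqref{regularity_condition} without any gain or further loss in spatial regularity. This is exactly where the harmonic-analytic spectral description of the capacitary Dirichlet-to-Neumann operator advertised in the abstract is indispensable, and it is the mechanism by which the parabolic smoothing in time compensates for the single missing spatial derivative that would otherwise defeat a direct implicit function argument.
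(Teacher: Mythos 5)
Your framework is the right one in broad outline — reformulate \eqref{bernoulli} as $F(\rho,t)=0$, differentiate in $t$ to obtain an evolution equation, and close the argument by parabolic maximal regularity in the little H\"older scale — but you have misidentified the operator whose sectoriality drives the whole proof, and this gap is serious because it is precisely where assumptions (A) and (B) enter.

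You assert that $L:=D_\rho F(\rho_0,Q(\cdot,0))$ ``is sectorial on $h^{2+\alpha}$ with natural domain $h^{3+\alpha}$'' and that ``in the monotone elliptic (resp.\ hyperbolic) case its spectrum lies in a right (resp.\ left) half-plane.'' This is not correct, and it is also not the relevant condition. The operator $L$ is (up to lower order) a Robin Dirichlet-to-Neumann map; its principal symbol at $\zeta\in\partial A_0$ is $-Q(\zeta)|\xi|/|\nabla N_\rho(\zeta)|$, which is negative and independent of the elliptic/hyperbolic classification. The classification in Definition \ref{definition-elliptic} concerns the sign of the solution $T[1]$ of \eqref{linear_bernoulli} (equivalently, $L^{-1}$ applied to a constant), a global/low-frequency piece of information; it does not shift the high-frequency spectrum of $L$ from one half-plane to the other. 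More importantly, once you write the differentiated identity as $\rho'(t)=-\Phi(\rho(t),t)$ with $\Phi=(D_\rho F)^{-1}D_QF\,\partial_tQ$, the operator that must belong to ${\rm Hol}(h^{3+\alpha},h^{2+\alpha})$ is $\partial_\rho\Phi$, \emph{not} $L$, because the right-hand side $-\Phi(\rho,t)$ contains no differential operator acting on $\rho$ at leading order — the parabolic smoothing is entirely a \emph{quasilinear} effect arising from the $\rho$-dependence of the coefficients $L^{-1},\,D_QF$. The form $L(\rho)\partial_t\rho = -D_QF\,\partial_tQ$ that you wrote is therefore not ``a well-posed parabolic problem''; it is a disguised first-order ODE whose parabolicity is invisible until you compute $\partial_\rho\Phi$.

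The paper's Proposition \ref{proposition-generation} carries out exactly this computation. The principal part of $\partial_\rho\Phi(\rho_0,0)$ is $\Pi=-M_1\,T\,M_2\,P$, where $T$ is the Robin solution operator (order $-1$, positive localized symbol $\frac{1}{|\xi|+\mu_\zeta}$), $P$ is the linearized mean curvature (order $2$, symbol $-(|\xi|^2+1)$ up to lower order), and $M_1>0$, $M_2=T[\partial_tQ]$ are multiplication operators. The localized symbol is $-m_1m_2\frac{|\xi|^2+1}{|\xi|+\mu_\zeta}$, which falls into a sector opening to the right if and only if $-m_1m_2>0$, i.e.\ $M_2=T[\partial_tQ]<0$. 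This is where (A) and (B) enter \emph{jointly}: hyperbolic monotone means $T[1]<0$, so $T[\partial_tQ]<0$ requires $\partial_tQ>0$; elliptic monotone means $T[1]>0$, so $T[\partial_tQ]<0$ requires $\partial_tQ<0$. Your proposal, by attributing the sign to $L$ alone rather than to $T[\partial_tQ]$, loses the interplay between the classification of $A_{\rho_0}$ and the monotonicity of $Q$ that produces the one-sided (forward-in-$t$) solvability stated in the theorem. To repair the argument you would need to replace ``$L$ sectorial, with half-plane depending on the classification'' by ``$\partial_\rho\Phi$ sectorial once $T[\partial_tQ]<0$'' and then carry out the freezing-of-coefficients and commutator estimate in $h^{k+\alpha}$ as in the paper's Steps 2--3.
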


\begin{remark}
{\rm 
In both cases, $A(t)$ shrinks as $t$ increases. 
This one-sided solvability reflects that \eqref{bernoulli} has a parabolic structure, and our method reveals this fact as a spectral property of the linearized operator of an evolution equation. 
}
\end{remark}

\begin{remark}
{\rm
The initial hyperbolic solution $A_{\rho_0}$ in a slightly perturbed ball $\Omega$ can be constructed as a perturbation of the radial hyperbolic solution $\mathbb{B}_{r_1(Q)}$ in $\mathbb{B}$ by a homotopy argument: the parameter $t$ now describes the deformation of $\mathbb{B}$ to $\Omega$ and the same technique developed in the present paper is applicable. 
In fact, for $n=2$, any simply-connected bounded domain $\Omega$ is conformally mapped onto the unit disk and the deformation of $\Omega$ reduces to that of $Q$, and thus our method directly applies for the construction of $A_{\rho_0}$ in $\Omega$. 
}
\end{remark}

This paper is organized as follows. 
In Section \ref{section-ift} we introduce our implicit function theorem in a functional analytic framework, in which a parametrized family of solutions $x=x(s)$ to a functional equation $F(x,s)=0$ is characterized as a solution to a certain evolution equation. 
In Section \ref{section-linearized_problem} we formulate \eqref{bernoulli} as $F(\rho,t)=0$ and derive the linearized equation, by which we define the non-degeneracy, ellipticity and hyperbolicity of solutions $A$. 
Section \ref{section-flow_characterization} deals with the corresponding evolution equation for \eqref{bernoulli}, which is described as a non-local geometric evolution of hypersurface $\partial A(t)$ for varying $Q(\cdot,t)$. 
Furthermore, another interesting characterization is given in terms of infinitely many conserved quantities (see Theorem \ref{theorem-characterization}). 
Section \ref{section-solvability} concerns the solvability of the evolution equation by the spectral analysis of the linearized operator; and thus locally foliated hyperbolic (elliptic) solutions are constructed as stated in Theorem \ref{theorem-local_foliation}. 

\section{Implicit function theorem}
\label{section-ift}

Our approach presented in this paper is based on the parabolic method, that is, we derive and analyze an evolution equation describing the behavior of solutions $A=A(t)$ for varying data $Q=Q(\cdot,t)$. 
This approach can be formulated as an implicit function theorem applicable to nonlinear problems with structural deficit called loss of derivatives, and thus has a common feature with the implicit function theorem of Nash \cite{nash-56} and Moser \cite{moser-61,moser-66_1,moser-66_2}. 
But the parabolic approach has the advantage that the loss of derivatives can be handled with the established theory of semigroups of linear operators, and hence intricate estimates required for the Nash-Moser method are no longer needed, and more importantly the solutions constructed by our method remain in the same regularity class. 

Let $X\subset Y\subset Z$ be a triplet of Banach spaces with continuous embeddings and consider the abstract equation
\begin{equation}
\label{implicit_form}
 F(x,s)=0 \quad (x\in X, \ s\in\mathbb{R}), 
\end{equation}
where $F$ is a $C^1$-mapping from $X\times\mathbb{R}$ to $Z$ with $F(0,0)=0$. 
If the Fr\'echet derivative $\partial_xF(0,0)\in\mathcal{L}(X,Z)$ (the space of bounded operators from $X$ to $Z$) is invertible, then for each given small data $s\in\mathbb{R}$ we can find a unique solution $x(s)\in X$ in a neighborhood of $x=0$ by the classical implicit function theorem. 
Indeed, the sequence of $X$-valued curves
\begin{equation}
\label{successive}
 x_1(s):=0, \quad x_{j+1}(s):=x_{j}(s)-\partial_xF(0,0)^{-1}F(x_j(s),s) \quad (-\varepsilon\leq s\leq\varepsilon)
\end{equation}
converges to a $C^1$-curve $x(s)$ satisfying $x(0)=0$ and $F(x(s),s)=0$. 
However, the argument would fail if we only have the regularity gain $\partial_xF(0,0)^{-1}\in\mathcal{L}(Z,Y)$, and hence $x_{j+1}(s)$ is merely $Y$-valued even if $x_j(s)$ is $X$-valued. 
This ``loss of derivatives'' happens when $\partial_xF(0,0)\in\mathcal{L}(X,Z)$ is not bijective; but it has a continuous extension to $Y$ denoted again by $\partial_xF(0,0)\in\mathcal{L}(Y,Z)$ with bounded inverse $\partial_xF(0,0)^{-1}\in\mathcal{L}(Z,Y)$. 
As we shall see in Section \ref{section-linearized_problem}, the Bernoulli problem \eqref{bernoulli} has this structure, and one would try to use the Nash-Moser scheme to overcome this regularity issue, namely, 
\begin{equation*}
 x_1(s):=0, \quad x_{j+1}(s):=x_{j}(s)-S\partial_xF(x_j(s),s)^{-1}F(x_j(s),s) \quad (-\varepsilon\leq s\leq\varepsilon), 
\end{equation*}
where $S:Y\mapsto X$ is a smoothing operator, and the inverse is taken at $(x_j(s),s)$ at each different step to accelerate the convergence speed in order to compensate for the deficit coming from the artificial operator $S$. 
Here at the compensation, one needs delicate estimates between several function spaces. 

We, instead, consider the evolution equation
\begin{equation}
\label{parabolic_form}
 x'(s)+\partial_xF(x(s),s)^{-1}\partial_sF(x(s),s)=0, \quad x(0)=0
\end{equation}
under the assumption that $\partial_xF(x,s)$ has its invertible extension in $\mathcal{L}(Y,Z)$ with
\begin{equation}
\label{inverse_of_extension}
 \partial_xF(\cdot,\cdot)\in C(U,\mathcal{L}(Y,Z)), \quad \partial_xF(x,s)^{-1}\in\mathcal{L}(Z,Y) \ \ ((x,s)\in U), 
\end{equation}
where $U\subset X\times \mathbb{R}$ is a neighborhood of $(0,0)$. 
A natural regularity condition on solutions to \eqref{parabolic_form} is
\begin{equation}
\label{natural_regularity}
 x(\cdot)\in C([0,\varepsilon),X)\cap C^1([0,\varepsilon),Y), 
\end{equation}
since the both sides of \eqref{parabolic_form} are balanced in the sense of regularity. 
Under this mild regularity condition on $x$, we can characterize solutions to \eqref{implicit_form} by \eqref{parabolic_form}. 

\begin{proposition}
\label{proposition-general_characterization}
Let $F\in C^1(X\times\mathbb{R},Z)$ satisfy $F(0,0)=0$ and assume \eqref{inverse_of_extension}, \eqref{natural_regularity} and $(x(s),s)\in U$. 
Then, $F(x(s),s)=0$ for $0\leq s<\varepsilon$ and $x(0)=0$ if and only if $x$ solves \eqref{parabolic_form}. 
\end{proposition}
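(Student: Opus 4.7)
The proposition is an \emph{iff} statement, and the crux in both directions is to justify a chain rule for $F(x(s),s)$ in which the inner derivative $x'(s)$ lives only in $Y$ rather than $X$. Once this chain rule is established, both implications are immediate.

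The plan is as follows. The forward direction ($F(x(s),s)=0 \ \Longrightarrow \ $ \eqref{parabolic_form}): differentiate the identity $F(x(s),s)=0$ in $s$ and apply the bounded operator $\partial_xF(x(s),s)^{-1}\in\mathcal{L}(Z,Y)$ supplied by \eqref{inverse_of_extension}; this yields \eqref{parabolic_form} in $Y$. The backward direction: compute $\frac{d}{ds}F(x(s),s)$ using the chain rule and substitute \eqref{parabolic_form}; the $s$-derivative of $F(x(s),s)$ is then identically zero in $Z$, and since $F(x(0),0)=F(0,0)=0$, the function $F(x(s),s)$ vanishes on $[0,\varepsilon)$.

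The only nontrivial point is the chain-rule computation, so I would present it once and use it for both implications. Given \eqref{natural_regularity}, $x\in C([0,\varepsilon),X)$ and $x'\in C([0,\varepsilon),Y)$. For $s,s+h\in[0,\varepsilon)$, I would split
\begin{equation*}
 F(x(s+h),s+h)-F(x(s),s) = I_1(h) + I_2(h),
\end{equation*}
where $I_2(h):=F(x(s),s+h)-F(x(s),s)$ and
\begin{equation*}
 I_1(h):=\int_0^1 \partial_xF\bigl(x(s)+\theta(x(s+h)-x(s)),\,s+h\bigr)\bigl(x(s+h)-x(s)\bigr)\,d\theta.
\end{equation*}
Since $F\in C^1(X\times\mathbb{R},Z)$, the term $I_2(h)/h$ converges in $Z$ to $\partial_sF(x(s),s)$. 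For $I_1(h)$, I use the identity $x(s+h)-x(s)=\int_s^{s+h}x'(\tau)\,d\tau$ in $Y$; the integrand in $I_1$ makes sense in $Z$ because of the extension $\partial_xF(\cdot,\cdot)\in C(U,\mathcal{L}(Y,Z))$ from \eqref{inverse_of_extension}. Continuity of $x(\cdot)$ in $X$, continuity of $\partial_xF$ as a map into $\mathcal{L}(Y,Z)$, and continuity of $x'$ in $Y$ then give $I_1(h)/h\to \partial_xF(x(s),s)x'(s)$ in $Z$ as $h\to 0$. Therefore
\begin{equation*}
 \frac{d}{ds}F(x(s),s)=\partial_xF(x(s),s)\,x'(s)+\partial_sF(x(s),s)\quad\text{in } Z.
\end{equation*}

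The main obstacle I expect is precisely this chain rule: the map $s\mapsto x(s)$ is not $C^1$ into $X$, only into $Y$, so one cannot simply invoke a textbook chain rule in $X\times\mathbb{R}\to Z$. The device that saves the argument is the hypothesis \eqref{inverse_of_extension}, which provides a continuous extension of $\partial_xF$ acting on the larger space $Y$; without it, the composition $\partial_xF(x(s),s)x'(s)$ would not even be well defined. Once this identity is established, the equivalence is a one-line consequence: applying $\partial_xF(x(s),s)^{-1}$ to $\frac{d}{ds}F(x(s),s)=0$ produces \eqref{parabolic_form}, and conversely \eqref{parabolic_form} forces $\frac{d}{ds}F(x(s),s)=0$ so that $F(x(s),s)\equiv F(0,0)=0$.
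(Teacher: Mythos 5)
Your proposal is correct and takes essentially the same route as the paper: establish the chain rule identity $\frac{d}{ds}F(x(s),s)=\partial_xF(x(s),s)[x'(s)]+\partial_sF(x(s),s)$ in $Z$ under the weaker regularity \eqref{natural_regularity} using the extension \eqref{inverse_of_extension}, and then deduce the equivalence in one line. The paper simply states that this identity ``can be verified'' without writing out the $I_1$/$I_2$ splitting; your argument is the verification the paper leaves implicit.
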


\begin{proof}
Under \eqref{inverse_of_extension} and \eqref{natural_regularity}, one can verify $F(x(\cdot),\cdot)\in C^1([0,\varepsilon),Z)$ and
\begin{equation*}
 \frac{d}{ds}\left(F\left(x(s),s\right)\right)=\partial_xF(x(s),s)[x'(s)]+\partial_sF(x(s),s). 
\end{equation*}
It is then straightforward to check the equivalency of the two conditions. 
\end{proof}

The original problem \eqref{implicit_form} now reduces to the solvability of \eqref{parabolic_form} in the regularity class \eqref{natural_regularity}. 
A sufficient condition for the latter is given in terms of a spectral property of the linearized operator, due to the maximal regularity theory of Da Prato and Grisvard \cite{daprato_grisvard-79}. 
The following result can be proved by slightly modifying the proof of Angenent \cite[Theorem 2.7]{angenent-90} to treat non-autonomous equations with sufficiently small $\varepsilon>0$. 
Here ${\rm Hol}(X,Y)$ denotes the set of all $E\in\mathcal{L}(X,Y)$ such that $-E$, as an unbounded operator in $Y$, generates a strongly continuous analytic semigroup on $Y$. 

\begin{proposition}
\label{proposition-solvability}
In addition to the assumptions in Proposition \ref{proposition-general_characterization}, suppose that $X,Y$ are continuous interpolation spaces and
\begin{equation*}
 \Phi(x,s):=\partial_xF(x,s)^{-1}\partial_sF(x,s)\in C^1(U,Y), \quad \partial_x\Phi(x,s)\in{\rm Hol}(X,Y). 
\end{equation*}
Then, \eqref{parabolic_form} is uniquely solvable in \eqref{natural_regularity} for some $\varepsilon>0$. 
\end{proposition}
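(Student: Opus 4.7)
The plan is to linearize \eqref{parabolic_form} at $(x,s)=(0,0)$, invoke the maximal regularity theorem of Da Prato--Grisvard for the resulting frozen-coefficient linear Cauchy problem, and then close a short-time Banach fixed-point argument in the regularity class \eqref{natural_regularity}. The continuous interpolation structure of the pair $(X,Y)$ is exactly what the maximal regularity machinery requires, while the nonautonomous dependence on $s$ will be absorbed into the nonlinear remainder and handled via the $C^1$-hypothesis on $\Phi$.

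To execute, I set $A:=\partial_x\Phi(0,0)\in{\rm Hol}(X,Y)$ and rewrite \eqref{parabolic_form} as
\begin{equation*}
 x'(s)+Ax(s)=G(x(s),s),\qquad G(x,s):=Ax-\Phi(x,s),\qquad x(0)=0,
\end{equation*}
so that $G\in C^1(U,Y)$ with $\partial_xG(0,0)=0$. Since $-A$ generates a strongly continuous analytic semigroup on $Y$ and $(X,Y)$ is a continuous interpolation couple, the Da Prato--Grisvard theorem furnishes a bounded linear solution operator
\begin{equation*}
 \mathcal{S}\colon C([0,T],Y)\longrightarrow E_T:=\bigl\{x\in C([0,T],X)\cap C^1([0,T],Y):x(0)=0\bigr\},
\end{equation*}
$f\mapsto x$ with $x'+Ax=f$ and $x(0)=0$, whose operator norm is bounded uniformly for $T\in(0,T_0]$ with some fixed $T_0>0$.

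Next I would split the unknown as $x=x_0+y$, where $x_0:=-\mathcal{S}(\Phi(0,0))\in E_{T_0}$ absorbs the (possibly nonzero) constant forcing $G(0,0)=-\Phi(0,0)$, and $y$ is required to solve
\begin{equation*}
 y'(s)+Ay(s)=H(y,s):=G(x_0(s)+y(s),s)-G(0,0),\qquad y(0)=0.
\end{equation*}
At $(y,s)=(0,0)$ the right-hand side vanishes, since $x_0(0)=0$. Combining $x_0\in C([0,T_0],X)$ with $x_0(0)=0$, the continuity of $\partial_xG$, and $\partial_xG(0,0)=0$, I can, for any sufficiently small $r>0$, choose $\varepsilon>0$ so small that the map $y\mapsto\mathcal{S}(H(y,\cdot))$ contracts the closed ball of radius $r$ in $E_\varepsilon$ into itself. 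Banach's fixed-point theorem then delivers a unique such $y$, and $x:=x_0+y$ is the desired solution in the class \eqref{natural_regularity}; uniqueness in the full class follows by a standard continuation.

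The principal obstacle is the uniform-in-$T$ bound on the solution operator $\mathcal{S}$, which is the technical heart of the Da Prato--Grisvard theory and is precisely where the continuous interpolation hypothesis on $(X,Y)$ is consumed. Beyond this, the new point compared to Angenent's autonomous setting is to verify that the $s$-dependence of $\Phi(x,s)$, hidden inside $G$, contributes no more than a small Lipschitz constant on the shrinking set $\{\|x\|_X\le r+\|x_0\|_{C([0,\varepsilon],X)},\ 0\le s\le\varepsilon\}$; this rests on the uniform continuity of $\partial_x\Phi$ supplied by the $C^1$-hypothesis, so the promised modification of \cite{angenent-90} is indeed only a slight one.
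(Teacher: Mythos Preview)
Your proposal is correct and is exactly the argument the paper has in mind: the paper does not give a proof at all, merely stating that the result ``can be proved by slightly modifying the proof of Angenent \cite[Theorem 2.7]{angenent-90} to treat non-autonomous equations,'' and you have spelled out precisely that modification---freezing $A=\partial_x\Phi(0,0)$, invoking Da~Prato--Grisvard maximal regularity on the continuous interpolation couple $(X,Y)$, peeling off the constant forcing $-\Phi(0,0)$ via $x_0$, and closing a contraction on a small ball in $E_\varepsilon$ using $\partial_xG(0,0)=0$ together with the $C^1$-continuity of $\Phi$ to absorb the $s$-dependence. There is nothing to add.
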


We carry out this program for Bernoulli's problem \eqref{bernoulli} through Sections \ref{section-linearized_problem}--\ref{section-solvability} and prove Theorem \ref{theorem-local_foliation}. 
It is noteworthy that \eqref{parabolic_form} for \eqref{bernoulli} is solvable only in one direction $s\geq0$. 
This reflects the fact that the spectrum of $\partial_x\Phi(0,0)$ is unbounded in the left half plane of $\mathbb{C}$ and thus it merely generates a semigroup but a group. 
This spectrum structure clarifies the reasoning why the classical implicit function theorem fails, or at least loses the regularity of solutions to \eqref{bernoulli}, if one considers a full neighborhood $Q_0-\varepsilon<Q<Q_0+\varepsilon$. 

\section{Linearized problem}
\label{section-linearized_problem}

We begin with formulating Bernoulli's free boundary problem \eqref{bernoulli} as a functional equation of the form \eqref{implicit_form}. 
Here, $\Omega$ is a bounded domain with $h^{2+\alpha}$-boundary $\partial\Omega$ and $Q(x,t)\in h^{2+\alpha}(\overline{\Omega}\times[0,\infty))$. 
Let us choose a reference domain $A_0\subset\overline{A_0}\subset\Omega$ with smooth boundary $\partial A_0$, say of class $h^{4+\alpha}$, and identify $\rho\in \mathcal{U}_\gamma\subset h^{3+\alpha}(\partial A_0)$ with the perturbed domain $A_\rho$ enclosed by $h^{3+\alpha}$-boundary
\begin{equation}
\label{gammarho}
 \partial A_\rho=\left\{\zeta+\rho(\zeta)\nu_0(\zeta) \mid \zeta\in\partial A_0\right\}, 
\end{equation}
where $\nu_0=\nu_0(\zeta)$ is the unit outer normal vector to $\partial A_0$ with respect to $\Omega\setminus\overline{A_0}$ and
\begin{equation*}
 \mathcal{U}_{\gamma}:=\{\rho\in h^{3+\alpha}(\partial A_0) \mid \|\rho\|_{h^{3+\alpha}(\partial A_0)}<\gamma\}, \quad \gamma\leq a/4, 
\end{equation*}
with $0<a<{\rm dist}\,(\partial A_0,\partial \Omega)$ taken to be small such that $\theta(\zeta, r):=\zeta+r\nu_0(\zeta)$ defines a diffeomorphism from $\partial A_0\times (-a,a)$ to its image. 
Denoting by $\zeta$ and $r$ the components of the inverse map $\theta^{-1}$, i.e., $\theta^{-1}(x)=(\zeta(x),r(x))$, we see that
\begin{equation*}
 \theta_{\rho}(x):=\left\{
 \begin{array}{cl}
  \theta\left(\zeta(x),r(x)+\eta(r(x))\rho(\zeta(x))\right) & {\rm if} \ x\in\theta(\partial A_0\times(-a,a)),\\
  x & {\rm otherwise},
 \end{array}
 \right. 
\end{equation*}
defines an $h^{2+\alpha}$-diffeomorphism from $\Omega\setminus A_0$ to $\Omega\setminus A_{\rho}$, where $\eta$ is a smooth cut-off function satisfying 
\begin{equation}
\label{cutoff_function}
 \eta(r)=\left\{
 \begin{array}{cl}
  1 & (|r|\leq a/4),\\
  0 & (|r|\geq 3a/4)
 \end{array}
 \right. 
 \quad \text{and} \quad \left|\frac{d\eta}{dr}(r)\right|<\frac{4}{a}. 
\end{equation}
The diffeomorphism $\theta_\rho$ induces the pull-back and push-forward operators
\begin{equation*}
 \theta_\rho^\ast u:=u\circ\theta_\rho, \quad \theta_\ast^\rho v:=v\circ\theta_\rho^{-1}
\end{equation*}
for $u\in h^{k+\alpha}(\overline{\Omega}\setminus A_{\rho})$ and $v\in h^{k+\alpha}(\overline{\Omega}\setminus A_0)$ ($0\leq k\leq2$). 
For a given $\rho\in\mathcal{U}_\gamma$, the first three equations in \eqref{bernoulli} with $A=A_\rho$ comprise the Dirichlet problem and thus always have a unique solution $u_\rho\in h^{2+\alpha}(\overline{\Omega}\setminus A_\rho)$. 
Hence if we define
\begin{equation}
\label{f}
 F(\rho,t):=\theta_\rho^\ast\left(\frac{\partial u_\rho}{\partial \nu_\rho}-Q(x,t)\right)\in h^{1+\alpha}(\partial A_0)
\end{equation}
with the unit outer normal vector $\nu_\rho$ to $\partial A_\rho$, then $A_\rho$ is a solution to \eqref{bernoulli} for $Q=Q(x,t)$ if and only if $F(\rho,t)=0$. 

Note that so far we only used the regularity $\partial A_0\in h^{3+\alpha}$ and $\partial A_\rho\in h^{2+\alpha}$, but for the differentiation of $F$ at $\rho\neq0$, we choose $A_\rho$ as a new reference domain and thus require $\partial A_0\in h^{4+\alpha}$ and $\partial A_\rho\in h^{3+\alpha}$. 
To see the effect of a change of reference domains, we define $F_{\rho_0}(\rho,t)$ by \eqref{f} with reference domain $A_{\rho_0}$, and we extend the definitions of $\partial A_\rho$, $\theta_\rho$ and $F_{\rho_0}(\rho,t)$ to allow general vectorial perturbations $\rho: \partial A_{\rho_0}\to \mathbb{R}^n$, i.e., $\partial A_\rho=\{\zeta+\rho(\zeta)\mid \zeta\in\partial A_{\rho_0}\}$ and $\theta(\zeta, \rho)=\zeta+\rho$. 
Then, 
\begin{equation}
\label{f_slide}
 F(\rho_0+\rho,t)=\theta_{\rho_0}^\ast F_{\rho_0}\left(\frac{\theta_\ast^{\rho_0}\rho}{|\nabla N_{\rho_0}|}\nu_{\rho_0}+(\theta_\ast^{\rho_0}\rho)\tau_{\rho_0},t\right)
\end{equation}
holds for $\rho_0,\rho\in \mathcal{U}_\gamma$, where $N_{\rho_0}(x):=r(x)-\rho_0(\zeta(x))$ for $x\in\theta(\partial A_0\times(-a,a))$ and $\tau_{\rho_0}:=\theta_\ast^{\rho_0}\nu_0-|\nabla N_{\rho_0}|^{-1}\nu_{\rho_0}$ is a tangential vector field on $\partial A_{\rho_0}$. 
Indeed, $N_{\rho_0}$ defines $\partial A_{\rho_0}$ as its level set and thus
\begin{equation*}
 \nu_{\rho_0}(\theta_{\rho_0}(\zeta))\cdot\nu_0(\zeta)=\frac{\nabla N_{\rho_0}(\theta_{\rho_0}(\zeta))}{|\nabla N_{\rho_0}(\theta_{\rho_0}(\zeta))|}\cdot\frac{\nabla N_0(\zeta)}{|\nabla N_0(\zeta)|}=\frac{1}{|\nabla N_{\rho_0}(\theta_{\rho_0}(\zeta))|}. 
\end{equation*}

\begin{proposition}
\label{proposition-linearized_operator}
Let $\partial\Omega\in h^{2+\alpha}$ and $Q=Q(x,t)\in h^{2+\alpha}(\overline{\Omega}\times[0,\infty))$. 
Then, 
\begin{enumerate}[(i)]
\item $F\in C^1(\mathcal{U}_\gamma\times[0,\infty),h^{1+\alpha}(\partial A_0))$. 
\item The Fr\'echet derivative of $F$ with respect to $\rho$ is given by, for $\tilde{\rho}\in h^{3+\alpha}(\partial A_0)$, 
\begin{equation*}
 \partial_\rho F(\rho,t)[\tilde{\rho}]=\theta_\rho^\ast\left(Hp+\frac{\partial p}{\partial \nu_\rho}-\frac{\partial Q}{\partial\nu_\rho}\frac{\theta_\ast^\rho\tilde{\rho}}{|\nabla N_\rho|}+\frac{\partial}{\partial \tau_\rho}\left(\frac{\partial u_\rho}{\partial \nu_\rho}-Q(x,t)\right)\theta_\ast^{\rho}\tilde{\rho}\right), 
\end{equation*}
where $H=H_{\partial A_\rho}\in h^{1+\alpha}$ is the mean curvature of $\partial A_\rho$ normalized in such a way that $H=1-n$ if $A_\rho=\mathbb{B}$, and $p$ is a unique solution to
\begin{equation}
\label{dirichlet_problem}
 \left\{
 \begin{aligned}
  -\Delta p&=0&&\textrm{in} \quad \Omega\setminus \overline{A_\rho},\\
  p&=0&&\textrm{on} \quad \partial\Omega,\\
  p&=-\frac{\partial u_\rho}{\partial\nu_\rho}\frac{\theta_\ast^\rho\tilde{\rho}}{|\nabla N_\rho|}&&\textrm{on} \quad \partial A_\rho. 
 \end{aligned}
 \right. 
\end{equation}
Note that the capacitary potential $u_\rho$ of $A_\rho$ in $\Omega$ satisfies $\partial_{\nu_\rho} u_\rho\in h^{2+\alpha}(\partial A_\rho)$. 
\item $\partial_\rho F(\rho,t)$ has its extension in $\mathcal{L}\left(h^{2+\alpha}(\partial A_0),h^{1+\alpha}(\partial A_0)\right)$ and
\begin{equation*}
 \partial_\rho F\in C\left(\mathcal{U}_\gamma\times[0,\infty),\mathcal{L}\left(h^{2+\alpha}\left(\partial A_0\right),h^{1+\alpha}\left(\partial A_0\right)\right)\right). 
\end{equation*}
\end{enumerate}
\end{proposition}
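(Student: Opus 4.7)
The plan is to reduce every computation at a general $\rho\in\mathcal{U}_\gamma$ to the case where the reference domain is $A_\rho$ itself and the perturbation starts at $0$, using the change-of-reference identity \eqref{f_slide}; with $A_\rho$ as the reference domain one has $\theta_0^\ast=\mathrm{id}$, $|\nabla N_0|\equiv 1$, $\tau_0\equiv 0$, so the formula linearizes cleanly and the general case follows by pulling back through $\theta_\rho^\ast$ and applying the chain rule. For (i), I would first transplant the Dirichlet problem for $u_\rho$ to the fixed annulus $\Omega\setminus\overline{A_0}$ via $\theta_\rho^\ast$, producing a second-order elliptic operator whose coefficients depend smoothly on $\rho\in\mathcal{U}_\gamma$ through the first derivatives of $\theta_\rho$; Schauder theory on the fixed domain then yields $C^1$-dependence of $\theta_\rho^\ast u_\rho$ in $h^{2+\alpha}$, and taking the boundary normal derivative and subtracting $\theta_\rho^\ast Q$ produces $F\in C^1(\mathcal{U}_\gamma\times[0,\infty),h^{1+\alpha}(\partial A_0))$. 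The regularity assertion $\partial_{\nu_\rho}u_\rho\in h^{2+\alpha}(\partial A_\rho)$ is separate: since $\partial A_0\in h^{4+\alpha}$ forces $\partial A_\rho\in h^{3+\alpha}$ and the Dirichlet datum on $\partial A_\rho$ is the constant $1$, a local Schauder estimate near the inner boundary upgrades $u_\rho$ to $h^{3+\alpha}$ there.

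To establish (ii), I compute $\partial_\rho F_{\rho_0}(0,t)[\tilde{\rho}]$ for a vectorial perturbation $\tilde{\rho}\colon\partial A_{\rho_0}\to\mathbb{R}^n$ and then specialize via \eqref{f_slide}. Decompose $\tilde{\rho}=\tilde{\rho}_\nu\nu_{\rho_0}+\tilde{\rho}_\tau$ with $\tilde{\rho}_\tau$ tangent to $\partial A_{\rho_0}$. The tangential part leaves $\partial A_{\rho_0}$ invariant and only acts on the pull-back $\theta_\rho^\ast$, contributing the term $\partial_{\tau_{\rho_0}}(\partial_{\nu_{\rho_0}}u_{\rho_0}-Q)\tilde{\rho}_\tau$. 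The normal part $\tilde{\rho}_\nu$ produces a genuine shape derivative: differentiating the identity $\theta_\rho^\ast u_\rho\equiv 1$ on $\partial A_{\rho_0}$ in $\rho$ shows that the shape derivative $p:=u_{\rho_0}'$ solves \eqref{dirichlet_problem}, and combining the Hadamard formula for the normal derivative on a moving surface with the identity $\partial_{\nu_{\rho_0}}^2 u_{\rho_0}=-H\,\partial_{\nu_{\rho_0}}u_{\rho_0}$ (obtained from $\Delta u_{\rho_0}=0$ together with the vanishing of tangential derivatives of $u_{\rho_0}$ on $\partial A_{\rho_0}$) yields
\[
\partial_\rho\bigl(\theta_\rho^\ast\partial_{\nu_\rho}u_\rho\bigr)\Big|_{\rho=0}[\tilde{\rho}_\nu\nu_{\rho_0}]=\partial_{\nu_{\rho_0}}p+Hp.
\]
The variation of $Q$ under the normal velocity contributes $-\partial_{\nu_{\rho_0}}Q\cdot\tilde{\rho}_\nu$. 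Identifying $\tilde{\rho}_\nu=\theta_\ast^{\rho_0}\tilde{\rho}/|\nabla N_{\rho_0}|$ and the tangential component $(\theta_\ast^{\rho_0}\tilde{\rho})\tau_{\rho_0}$ as prescribed by \eqref{f_slide}, and pulling back by $\theta_{\rho_0}^\ast$, assembles the formula in the statement.

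For (iii), the structure of the formula makes the extension transparent. The terms involving $\partial_{\nu_\rho}Q\cdot\tilde{\rho}$, $\partial_{\tau_\rho}(\cdot)\tilde{\rho}$ and $Hp$ are pointwise multiplications by $\tilde{\rho}$ and extend trivially from $h^{3+\alpha}$ to $h^{2+\alpha}$. The only sensitive term is $\partial_{\nu_\rho}p$: for $\tilde{\rho}\in h^{2+\alpha}(\partial A_0)$ the boundary data in \eqref{dirichlet_problem} lies in $h^{2+\alpha}(\partial A_\rho)$, Schauder theory gives $p\in h^{2+\alpha}$ up to $\partial A_\rho$, and hence $\partial_{\nu_\rho}p\in h^{1+\alpha}(\partial A_\rho)$; this is precisely the loss of derivatives highlighted in Section \ref{section-ift}. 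Continuity $\partial_\rho F\in C(\mathcal{U}_\gamma\times[0,\infty),\mathcal{L}(h^{2+\alpha},h^{1+\alpha}))$ then reduces to the continuous $\rho$-dependence of the coefficients of the transplanted Dirichlet problem, of the mean curvature $H_{\partial A_\rho}$, and of the tangent vector field $\tau_\rho$, all of which is routine after pulling back to the fixed boundary $\partial A_0$. The main obstacle I anticipate is the bookkeeping in the Hadamard calculation of step two: verifying that the vectorial splitting dictated by \eqref{f_slide} feeds the normal component into the shape derivative and the tangential component into the pull-back without spurious cross terms, and that the curvature contribution $Hp$ emerges from the identity $\partial_{\nu_{\rho_0}}^2 u_{\rho_0}+H\partial_{\nu_{\rho_0}}u_{\rho_0}=0$ rather than being absorbed into the data defining $p$.
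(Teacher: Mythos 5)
Your proposal is correct and takes essentially the same route as the paper: compute the derivative at $\rho=0$ (equivalently, treat $A_\rho$ as the reference domain), use the identity $\partial^2_{\nu}u+H\partial_{\nu}u=0$ that follows from harmonicity and $u\equiv 1$ on the boundary to turn the Hadamard-type variation of $\partial_{\nu}u$ into $Hp+\partial_{\nu}p$, and propagate to general $\rho$ via \eqref{f_slide}, with the tangential component feeding into the $\partial_{\tau_\rho}$ term. Your presentation makes the normal/tangential splitting explicit where the paper performs a Taylor expansion in $\varepsilon$ and then invokes \eqref{f_slide}, but the mathematics is the same, and your reading of (iii) as a one-derivative loss localized entirely in the $\partial_{\nu_\rho}p$ term matches the paper's reasoning.
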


\begin{proof}
Let us first consider the differentiability of $F$ at $\rho=0$. 
Substituting $\rho=\varepsilon\tilde{\rho}$ and the formal expansion $u_\rho=u_0+\varepsilon p+o(\varepsilon)$ into
\begin{equation*}
\left\{
\begin{aligned}
 -\Delta u_\rho&=0 && \textrm{in} \quad \Omega\setminus \overline{A_\rho},\\
 u_\rho&=0 && \textrm{on} \quad \partial\Omega,\\
 u_\rho&=1 && \textrm{on} \quad \partial A_\rho, 
\end{aligned}
\right. 
\end{equation*}
we obtain
\begin{align*}
 0&=-\Delta u_\rho(x)=-\varepsilon\Delta p(x)+o(\varepsilon) && \textrm{for} \quad x\in\Omega\setminus \overline{A_0},\\
 0&=u_\rho(x)=\varepsilon p(x)+o(\varepsilon) && \textrm{for} \quad x\in\partial\Omega,\\
 1&=u_\rho(x+\varepsilon\tilde{\rho}\nu_0)=u_0(x)+\varepsilon\frac{\partial u_0}{\partial\nu_0}(x)\tilde{\rho}+\varepsilon p(x)+o(\varepsilon)\\
 &=1+\varepsilon\frac{\partial u_0}{\partial\nu_0}(x)\tilde{\rho}+\varepsilon p(x)+o(\varepsilon) && \textrm{for} \quad x\in\partial A_0. 
\end{align*}
Thus, letting $\varepsilon\to0$, we see that $p$ satisfies \eqref{dirichlet_problem}. 
Moreover, for $x\in\partial A_0$, 
\begin{align*}
 &F(\rho,t)=\frac{\partial u_\rho}{\partial \nu_\rho}(x+\varepsilon\tilde{\rho}\nu_0)-Q(x+\varepsilon\tilde{\rho}\nu_0,t)\\
 &=\frac{\partial u_\rho}{\partial\nu_0}(x+\varepsilon\tilde{\rho}\nu_0)+\varepsilon\frac{\partial u_\rho}{\partial\tau}(x+\varepsilon\tilde{\rho}\nu_0)-Q(x+\varepsilon\tilde{\rho}\nu_0,t)+o(\varepsilon)\\
 &=\frac{\partial u_0}{\partial\nu_0}(x)+\varepsilon\frac{\partial^2 u_0}{\partial \nu_0^2}(x)\tilde{\rho}+\varepsilon\frac{\partial p}{\partial\nu_0}(x)+\varepsilon\frac{\partial u_0}{\partial\tau}(x)-Q(x,t)-\varepsilon\frac{\partial Q}{\partial\nu_0}(x,t)\tilde{\rho}+o(\varepsilon)\\
 &=F(0,t)-\varepsilon H\frac{\partial u_0}{\partial\nu_0}(x)\tilde{\rho}+\varepsilon\frac{\partial p}{\partial\nu_0}(x)-\varepsilon\frac{\partial Q}{\partial\nu_0}(x,t)\tilde{\rho}+o(\varepsilon), 
\end{align*}
where we have used the fact that $\nu_\rho$ and $\partial^2 u_0/\partial \nu_0^2$ can be represented by a tangent vector $\tau$ to $\partial A_0$ and the Laplace-Beltrami operator $\Delta_{\partial A_0}$ on $\partial A_0$ as
\begin{align*}
 &\nu_\rho=\nu_0+\varepsilon\tau+o(\varepsilon),\\
 &\begin{aligned}
  0=\Delta u_0&=\Delta_{\partial A_0}u_0+\frac{\partial^2 u_0}{\partial\nu_0^2}+H\frac{\partial u_0}{\partial\nu_0}=\frac{\partial^2 u_0}{\partial\nu_0^2}+H\frac{\partial u_0}{\partial\nu_0}. 
 \end{aligned}
\end{align*}
This shows (ii) for $\rho=0$ by letting $\varepsilon\to0$. 
The above argument can be justified by estimating the corresponding norms of the error terms. 
In view of \eqref{f_slide}, the same procedure with $A_\rho$ chosen to be the reference domain yields the differentiability of $F$ and the formula in (ii) for any $\rho$; and thus (i) follows. 
Finally, the formula in (ii) still makes sense for $\tilde{\rho}\in h^{2+\alpha}(\partial A_0)$ and this gives the extension in (iii). 
\end{proof}

Suppose that $F(\rho,t)=0$, i.e., $A_\rho$ is a solution to \eqref{bernoulli} for $Q=Q(\cdot,t)>0$. 
The extended operator $\partial_\rho F(\rho,t)$ in Proposition \ref{proposition-linearized_operator} (iii) has the bounded inverse $\partial_\rho F(\rho,t)^{-1}\in\mathcal{L}(h^{1+\alpha}(\partial A_0),h^{2+\alpha}(\partial A_0))$ if the boundary value problem
\begin{equation}
\label{linear_bernoulli}
 \left\{
 \begin{aligned}
  -\Delta p&=0&&\textrm{in} \quad \Omega\setminus \overline{A},\\
  p&=0&&\textrm{on} \quad \partial\Omega,\\
  \frac{\partial p}{\partial\nu}+\left(H+\frac{\partial_\nu Q}{Q}\right)p&=\varphi&&\textrm{on} \quad \partial A
 \end{aligned}
 \right.
\end{equation}
with $A=A_\rho$ is uniquely solvable for $\varphi\in h^{1+\alpha}(\partial A_\rho)$, since the tangential derivative in the formula of $\partial_\rho F(\rho,t)$ vanishes. 
Moreover, 
\begin{equation}
\label{inverse_operator}
 \theta_\ast^\rho\partial_\rho F(\rho,t)^{-1}[\theta_{\rho}^\ast\varphi]=-\frac{p}{Q}|\nabla N_\rho|\in h^{2+\alpha}(\partial A_{\rho}). 
\end{equation}
We shall now introduce some notions for solutions $A$ to \eqref{bernoulli} in terms of the linearized equation \eqref{linear_bernoulli}. 

\begin{definition}[Non-degeneracy]
\label{definition-nondegeneracy}
A solution $A$ to \eqref{bernoulli} is called non-degenerate if the linearized problem \eqref{linear_bernoulli} with $\varphi=0$ has only the trivial solution $p=0$. 
\end{definition}

\begin{remark}
{\rm
The non-degeneracy of $A$, in fact, guarantees the unique solvability of \eqref{linear_bernoulli} for any $\varphi$ by the Fredholm theory (see Lemma \ref{lemma-nondegeneracy}). 
}
\end{remark}

Furthermore, a classification of solutions $A$ in terms of the behavior of solutions $p$ to \eqref{linear_bernoulli} was introduced by Flucher and Rumpf \cite{flucher_rumpf-97} as an extension of Beurling's original definition in \cite{beurling-57}. 

\begin{definition}[Elliptic, hyperbolic and parabolic solutions]
\label{definition-elliptic}
A solution $A$ to \eqref{bernoulli} is called elliptic (hyperbolic) if \eqref{linear_bernoulli} has a solution $p$ for $\varphi\equiv1$ and all solutions $p$ satisfy
\begin{equation*}
 \int_{\partial A}p\,d\sigma>0 \quad (<0). 
\end{equation*}
Otherwise, $A$ is called parabolic. 
Moreover, an elliptic (hyperbolic) solution $A$ is said to be monotone if $p>0$ ($<0$) holds everywhere on $\partial A$. 
\end{definition}

\begin{remark}
{\rm 
Elliptic (hyperbolic) solutions are interpreted as volume-increasing (decreasing) solutions for an infinitesimal increase of $Q$. 
Indeed, let $A=A_0$, for simplicity, be an elliptic (hyperbolic) solution to \eqref{bernoulli} for $Q=Q_0(x)>0$, and suppose $F(\rho(t),t)=0$ for $Q(x,t)=Q_0(x)+t$. 
Since one needs to have $\partial_\rho F(0,0)[\rho'(0)]=-\partial_tF(0,0)=1$, from \eqref{inverse_operator} it follows that
\begin{equation*}
 \frac{d}{dt}\left[\int_{A_{\rho(t)}}\,dx\right]\Bigg|_{t=0}=-\int_{\partial A_0}\rho'(0)\,d\sigma=\int_{\partial A_0}\frac{p}{Q_0}\,d\sigma>0 \quad (<0). 
\end{equation*}
The monotonicity implies that $A_{\rho(t)}$ increases (decreases) in the sense of set inclusion, which is nothing but Beurling's original definition. 
}
\end{remark}

In Example in Section \ref{section-introduction}, $\mathbb{B}_{r_\ast}$ is parabolic and degenerate with one dimensional kernel and $\mathbb{B}_{r_1(Q)}$ ($\mathbb{B}_{r_2(Q)}$) is non-degenerate, hyperbolic (elliptic) and monotone (see \cite[Proposition 6]{flucher_rumpf-97}). 
In particular, the non-degeneracy, ellipticity and monotonicity of $\mathbb{B}_{r_2(Q)}$ easily follows from the following useful criterion which is a slight generalization of Acker \cite{acker-89_1} and Flucher and Rumpf \cite{flucher_rumpf-97} to variable $Q(x)$. 

\begin{proposition}
If a solution $A$ (with $C^2$-boundary $\partial A$) to \eqref{bernoulli} satisfies $H+Q+Q^{-1}\partial_\nu Q>0$ on $\partial A$, then $A$ is non-degenerate, elliptic and monotone. 
\end{proposition}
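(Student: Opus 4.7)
The plan is to read the hypothesis as the strict positive supersolution condition $L(1)>0$ for the boundary operator $L$ naturally attached to \eqref{linear_bernoulli}, and to extract a maximum principle from this by a sliding-constant argument. Writing $c := H + Q^{-1}\partial_\nu Q$ for the Robin coefficient in \eqref{linear_bernoulli}, I would define, for each $g\in h^{2+\alpha}(\partial A)$,
\[
 Lg := (\partial_\nu P + cP)|_{\partial A},
\]
where $P$ is the harmonic extension of $g$ into $D:=\Omega\setminus\overline{A}$ with $P=0$ on $\partial\Omega$, so that solving \eqref{linear_bernoulli} with datum $\varphi$ is equivalent to solving $Lg=\varphi$. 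The key observation is that for $g\equiv 1$ the extension $P$ is precisely the capacitary potential $u$, so
\[
 L(1) = \partial_\nu u + c = Q + H + Q^{-1}\partial_\nu Q > 0 \quad \text{on } \partial A
\]
by hypothesis; that is, the constant $1$ is a strict positive supersolution for $L$.

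The main step is the boundary maximum principle
\[
 Lg \geq 0 \text{ on }\partial A \implies g \geq 0 \text{ on }\partial A,
\]
which I would prove by contradiction. Set $\lambda^* := \inf\{\lambda \geq 0 : g+\lambda \geq 0 \text{ on }\partial A\}$ and suppose $\lambda^*>0$. The harmonic extension $P^*$ of $g+\lambda^*$ (with $P^*=0$ on $\partial\Omega$) is nonnegative on $\partial A$ and not identically zero: otherwise $g+\lambda^*\equiv 0$ would force $Lg = -\lambda^* L(1) < 0$, contradicting $Lg\geq 0$. The strong maximum principle therefore gives $P^*>0$ in $D$, while by the definition of $\lambda^*$ the function $g+\lambda^*$ vanishes at some $x_0\in\partial A$. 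Hopf's lemma, applicable since $\partial A\in C^2$, then yields $\partial_\nu P^*(x_0) < 0$; combined with $P^*(x_0)=0$ this gives $L(g+\lambda^*)(x_0) < 0$, contradicting the pointwise lower bound $L(g+\lambda^*) = Lg + \lambda^* L(1) > 0$. Hence $\lambda^*=0$, i.e., $g\geq 0$.

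All three conclusions now follow. Non-degeneracy is immediate by applying the maximum principle to $\pm g$ when $Lg = 0$. Given non-degeneracy, the Fredholm argument referenced in the remark after Definition \ref{definition-nondegeneracy} furnishes a unique solution $p$ to \eqref{linear_bernoulli} with $\varphi\equiv 1$, and the maximum principle gives $p\geq 0$ on $\partial A$. Strict positivity follows by one more Hopf argument: if $p$ vanished at some $x_0\in\partial A$, the extension $P$ would be nonnegative, nontrivial (else $Lp\equiv 0\neq 1$), hence strictly positive in $D$, so Hopf would give $\partial_\nu P(x_0)<0$ and thus $Lp(x_0)<0$, contradicting $\varphi\equiv 1$. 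Therefore $p>0$ pointwise on $\partial A$, which is the monotonicity and trivially implies ellipticity $\int_{\partial A} p\,d\sigma>0$. The main obstacle is conceptual rather than computational: the naive Hopf argument applied directly to the Robin problem \eqref{linear_bernoulli} requires the bare coefficient $c = H+Q^{-1}\partial_\nu Q$ to be nonnegative, which is strictly stronger than the hypothesis; the stated condition only makes $L(1)$ positive because it contains the extra Dirichlet-to-Neumann contribution $\partial_\nu u = Q$, and it is the sliding-constant device that converts that single piece of information into a full maximum principle.
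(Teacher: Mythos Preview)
Your proof is correct and is essentially the same as the paper's: the paper applies the maximum principle and Hopf's lemma to $w:=p-mu$ with $m:=\min_{\partial A}p$, which is exactly your harmonic extension $P^\ast$ of $g+\lambda^\ast$ (since the harmonic extension of the constant $\lambda^\ast$ with zero data on $\partial\Omega$ is $\lambda^\ast u$, and $\lambda^\ast=-m$ when $m<0$). Your observation that the hypothesis is precisely $L(1)>0$ makes the structure more transparent, but the underlying argument is identical.
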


\begin{proof}
The proof is based on the maximum principle applied to $w:=p-mu$ with $m:=\min_{\partial A}p$. 
Since $u,p$ respectively satisfy \eqref{bernoulli}, \eqref{linear_bernoulli}, $w$ satisfies
\begin{equation*}
 \left\{
 \begin{aligned}
  -\Delta w&=0&&\text{in} \quad \Omega\setminus \overline{A},\\
  w&=0&&\text{on} \quad \partial\Omega,\\
  w&\geq 0&&\text{on} \quad \partial A,\\
  \frac{\partial w}{\partial\nu}+\left(H+\frac{\partial_\nu Q}{Q}\right)w&=\varphi-m\left(H+Q+\frac{\partial_\nu Q}{Q}\right)&&\text{on} \quad \partial A. 
 \end{aligned}
 \right.
\end{equation*}
Hence, $\varphi\geq0$ implies $w\equiv0$ or $m>0$; otherwise the last boundary condition is violated at minima $x_0\in\partial A$ of $w$. 
In particular, $m>0$ if $\varphi\equiv 1$. 
Moreover, for $\varphi\equiv0$, both alternatives imply $m\geq0$; and similarly $M:=\max_{\partial A}p\leq 0$ by applying the same argument to $w:=p-Mu$, and hence $p\equiv 0$. 
\end{proof}

No such a simple criterion is known for hyperbolic solutions. 
Only a perturbation result is available. 
Hereafter, Definitions \ref{definition-nondegeneracy} and \ref{definition-elliptic} are extended to any domain $A$. 
The proof is postponed to Section \ref{section-solvability} (see Lemma \ref{lemma-nondegeneracy}). 

\begin{proposition}
If $A_{\rho_0}$ is non-degenerate, hyperbolic (elliptic) and monotone, then so is $A_{\rho}$ for $\rho\in h^{3+\alpha}(\partial A_0)$ provided $\|\rho-\rho_0\|_{h^{3+\alpha}(\partial A_0)}$ is sufficiently small. 
\end{proposition}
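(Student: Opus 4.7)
The plan is to exploit the continuity of the linearized operator in $\rho$ established in Proposition \ref{proposition-linearized_operator}(iii) together with the identity \eqref{inverse_operator}. All three properties — non-degeneracy, hyperbolicity/ellipticity, and monotonicity — are encoded in the extended operator $\partial_\rho F(\rho,t)$ and its action on the constant function $\varphi\equiv 1$, so they should all survive under small $h^{3+\alpha}$-perturbations of $\rho_0$.

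First I would convert non-degeneracy of $A_{\rho_0}$ into invertibility of the extension $\partial_\rho F(\rho_0,t)\in\mathcal{L}(h^{2+\alpha}(\partial A_0),h^{1+\alpha}(\partial A_0))$. This uses the forthcoming Lemma \ref{lemma-nondegeneracy}, which upgrades the triviality of the kernel of \eqref{linear_bernoulli} with $\varphi=0$ to full invertibility via the Fredholm alternative. Once this is in hand, the openness of the set of invertible operators and the continuity of operator inversion, combined with Proposition \ref{proposition-linearized_operator}(iii), yield that $\partial_\rho F(\rho,t)$ is invertible and
\begin{equation*}
\partial_\rho F(\rho,t)^{-1}\longrightarrow \partial_\rho F(\rho_0,t)^{-1} \quad\text{in}\quad \mathcal{L}\bigl(h^{1+\alpha}(\partial A_0),h^{2+\alpha}(\partial A_0)\bigr)
\end{equation*}
as $\|\rho-\rho_0\|_{h^{3+\alpha}(\partial A_0)}\to 0$. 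This establishes non-degeneracy of $A_\rho$.

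Next, let $p_\rho$ denote the unique solution of \eqref{linear_bernoulli} at $A=A_\rho$ with $\varphi\equiv 1$. From \eqref{inverse_operator} one has
\begin{equation*}
\theta_\rho^\ast\!\left(-\frac{p_\rho}{Q}|\nabla N_\rho|\right)=\partial_\rho F(\rho,t)^{-1}\bigl[\theta_\rho^\ast(1)\bigr].
\end{equation*}
As $\rho\to\rho_0$, the right-hand side converges in $h^{2+\alpha}(\partial A_0)$: the operator inverse converges in norm by the previous step, and the data $\theta_\rho^\ast(1)$ depends smoothly on $\rho$ in the same space. Composing with the continuous embedding $h^{2+\alpha}(\partial A_0)\hookrightarrow C(\partial A_0)$ gives uniform convergence of the pulled-back functions on $\partial A_0$. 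Since monotone ellipticity/hyperbolicity of $A_{\rho_0}$ means that $p_{\rho_0}$ has a strict definite sign on the compact set $\partial A_{\rho_0}$, and hence the pulled-back quantity $\theta_{\rho_0}^\ast(-p_{\rho_0}Q^{-1}|\nabla N_{\rho_0}|)$ is uniformly bounded away from zero on $\partial A_0$, the same strict sign persists for $\rho$ sufficiently close to $\rho_0$. Pushing forward via $\theta_\ast^\rho$ (which preserves pointwise signs) yields $p_\rho$ of the same strict sign, hence the monotonicity and the ellipticity/hyperbolicity of $A_\rho$ (the integral in Definition \ref{definition-elliptic} automatically inherits the sign).

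The main obstacle is really not in this short perturbation argument itself, which is a routine openness-of-invertibility plus embedding argument. The substantive work lies upstream, in the invocation of Lemma \ref{lemma-nondegeneracy}: turning triviality of the kernel of the mixed boundary value problem \eqref{linear_bernoulli} into solvability for arbitrary $\varphi\in h^{1+\alpha}(\partial A)$ via Fredholm theory, for which one has to set up the correct compact-perturbation framework (typically by factoring $\partial_\rho F$ through the Dirichlet-to-Neumann map on $\Omega\setminus\overline{A_\rho}$). Once Lemma \ref{lemma-nondegeneracy} is granted, the proposition follows from the three steps sketched above.
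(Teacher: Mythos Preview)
Your proposal is correct and follows essentially the same approach as the paper. The paper packages the argument inside Lemma \ref{lemma-nondegeneracy}(ii): non-degeneracy is upgraded to invertibility of $(L(\rho),B(\rho,t))$ via Fredholm theory, then persists by openness of invertible operators and continuity of $(\rho,t)\mapsto(L(\rho),B(\rho,t))$, while monotone hyperbolicity/ellipticity persists by the continuity of $(\rho,t)\mapsto T(\rho,t)[1]$; your version runs the identical logic through $\partial_\rho F(\rho,t)$ and \eqref{inverse_operator} instead of the full-domain operator, which amounts only to a change of bookkeeping.
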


\section{Characterization by an evolution equation}
\label{section-flow_characterization}

If $F(\rho_0,t_0)=0$ and $A_{\rho_0}$ is non-degenerate, one would proceed to the successive approximation procedure as \eqref{successive} in order to construct a solution $\rho$ to $F(\rho,t)=0$ for $t\neq t_0$; but this would generally fail because of the loss of derivatives $\partial_\rho F(\rho_0,t_0)^{-1}F(\rho,t)\in h^{2+\alpha}(\partial A_0)$ for $\rho\in h^{3+\alpha}(\partial A_0)$ as presented in Section \ref{section-ift}. 
We can overcome this regularity issue by the parabolic approach, and by Proposition \ref{proposition-general_characterization} it reduces to the solvability of
\begin{equation}
\label{evolution_eq}
 \rho'(t)+\partial_\rho F\left(\rho(t),t\right)^{-1}\partial_t F\left(\rho(t),t\right)=0
\end{equation}
with $\rho(0)=\rho_0$ under the regularity condition \eqref{regularity_condition}. 
Proposition \ref{proposition-linearized_operator} and \eqref{inverse_operator} show that, for $A(t)=A_{\rho(t)}$, \eqref{evolution_eq} is represented by geometric evolution equation
\begin{equation}
\label{flow}
\begin{aligned}
 &\displaystyle V=-\frac{p}{Q} \quad \textrm{on} \quad \partial A(t),\\
 &\hspace{5mm}\textrm{with} \quad \left\{
 \begin{aligned}
 -\Delta p&=0&&\textrm{in} \quad \Omega\setminus \overline{A(t)},\\
 p&=0&&\textrm{on} \quad \partial\Omega,\\
 \frac{\partial p}{\partial\nu}+\left(H+\frac{1}{Q}\frac{\partial Q}{\partial\nu}\right)p&=\frac{\partial Q}{\partial t}&&\textrm{on} \quad \partial A(t), 
 \end{aligned}
 \right. 
\end{aligned}
\end{equation}
where $V$ is the speed of moving surface $\partial A(t)$ in the outer normal direction $\nu$ with respect to $\Omega\setminus\overline{A(t)}$ and is represented by
\begin{equation*}
 V=\frac{\theta_\ast^{\rho(t)}\rho'(t)}{|\nabla N_{\rho(t)}|}, 
\end{equation*}
and $H=H_{\partial A(t)}$ is the normalized mean curvature of $\partial A(t)$. 
Note that the evolution speed $V$ is non-locally determined by $p$ via the boundary value problem. 

Equation \eqref{flow}, by design, characterizes a family of solutions $A(t)$ to \eqref{bernoulli} for varying $Q=Q(x,t)$. 
Furthermore, \eqref{flow} also has an interesting characterization in terms of infinitely many conserved quantities
\begin{equation*}
 m_h(t):=\int_{\partial A(t)}Q(x,t)h(x)\,d\sigma
\end{equation*}
for each $h\in H_{\partial\Omega}(\Omega\setminus A(t))$, where, for nested domains $A\subset\Omega$, $H_{\partial\Omega}(\Omega\setminus A)$ denotes the space of all harmonic functions $h$ in a neighborhood of $\Omega\setminus A$, continuous up to $\partial\Omega$ and satisfying $h=0$ on $\partial\Omega$. 
As in the previous section, $\Omega$ denotes a bounded domain and $A_0\subset\overline{A_0}\subset\Omega$ is a smooth reference domain. 

\begin{theorem}
\label{theorem-characterization}
Let $\partial\Omega\in h^{2+\alpha}$ and $Q=Q(x,t)\in h^{2+\alpha}(\overline{\Omega}\times[0,\infty))$ with $Q>0$, and let $A(0)=A_{\rho(0)}$ with $\rho(0)\in h^{3+\alpha}(\partial A_0)$ be a solution to \eqref{bernoulli} for $Q(\cdot,0)$. 
If \eqref{regularity_condition} holds and $A(t)=A_{\rho(t)}$ are non-degenerate, then the following are equivalent: 
\begin{enumerate}[(i)]
\item Each $A(t)$ is a solution to \eqref{bernoulli} for $Q(\cdot,t)$; 
\item $\{A(t)\}_{0\leq t<T}$ is a solution to \eqref{flow}; 
\item $m_h'(t)=0$ for all $h\in H_{\partial\Omega}(\Omega\setminus A(t))$. 
\end{enumerate}
\end{theorem}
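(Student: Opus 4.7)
The strategy is to establish (i) $\iff$ (ii) via Proposition \ref{proposition-general_characterization}, and (ii) $\iff$ (iii) by combining the first-variation formula on a moving hypersurface with Green's identity and a density argument for harmonic test functions.

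For (i) $\iff$ (ii), the non-degeneracy of $A(t)$ together with Proposition \ref{proposition-linearized_operator}(iii) and the explicit inverse \eqref{inverse_operator} guarantees that $\partial_\rho F(\rho(t),t)$ extends to an isomorphism in $\mathcal{L}(h^{2+\alpha}(\partial A_0), h^{1+\alpha}(\partial A_0))$ along the trajectory, so hypothesis \eqref{inverse_of_extension} of Proposition \ref{proposition-general_characterization} holds. Applying that proposition (at each base point, using the initial identity $F(\rho(0),0)=0$) translates $F(\rho(t),t)\equiv 0$ into the evolution equation \eqref{evolution_eq}, which using \eqref{inverse_operator} and $\partial_t F(\rho,t) = -\theta_\rho^\ast \partial_t Q$ becomes precisely the geometric flow \eqref{flow}.

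For (ii) $\Rightarrow$ (iii), I apply the transport formula for a moving hypersurface,
\begin{equation*}
\frac{d}{dt}\int_{\partial A(t)} f\,d\sigma = \int_{\partial A(t)}\left(\partial_t f + V\partial_\nu f + VHf\right)d\sigma,
\end{equation*}
to $f = Q(x,t)h(x)$ (with $\partial_t h = 0$ because $h$ is a fixed harmonic function). Substituting $VQ = -p$ from the first equation of \eqref{flow} and using Green's second identity for the harmonic pair $p,h$ vanishing on $\partial\Omega$ to exchange $\int_{\partial A(t)} p\,\partial_\nu h\,d\sigma$ and $\int_{\partial A(t)} h\,\partial_\nu p\,d\sigma$, one obtains
\begin{equation*}
m_h'(t) = \int_{\partial A(t)} h\left[\partial_t Q - \partial_\nu p - \left(H + \frac{\partial_\nu Q}{Q}\right)p\right] d\sigma,
\end{equation*}
which vanishes by the third equation of \eqref{flow}.

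For (iii) $\Rightarrow$ (ii), given the family $\{A(t)\}$ and its normal velocity $V$, I define $p(\cdot,t)$ by the first two equations of \eqref{flow} (the Dirichlet problem with $p|_{\partial A(t)} = -VQ$). The identical computation shows that (iii) is equivalent to
\begin{equation*}
\int_{\partial A(t)} h\,q\,d\sigma = 0 \qquad \forall\, h \in H_{\partial\Omega}(\Omega\setminus A(t)),
\end{equation*}
with $q := \partial_t Q - \partial_\nu p - (H + \partial_\nu Q/Q)\,p$. The main technical point is to conclude $q \equiv 0$ pointwise on $\partial A(t)$ from this integral identity, which requires density of the traces $\{h|_{\partial A(t)} : h \in H_{\partial\Omega}(\Omega\setminus A(t))\}$ in a function space of the relevant dual. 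For each $y \in A(t)$ the Green's function $G_\Omega(\cdot, y)$ belongs to $H_{\partial\Omega}(\Omega\setminus A(t))$, and the classical potential-theoretic argument — any measure $\mu$ on $\partial A(t)$ annihilating all $G_\Omega(\cdot, y)$ produces a function $U(y) := \int G_\Omega(x,y)\,d\mu(x)$ that is harmonic on $\Omega\setminus\partial A(t)$, vanishes on $\partial\Omega$ and throughout $A(t)$, and hence is identically zero by the Plemelj jump relation for single-layer potentials, forcing $\mu = 0$ — yields the required density and hence $q \equiv 0$. All remaining steps are routine: the chain rule applied to $F(\rho(t),t)$, the first-variation formula on a moving hypersurface, and Green's identity.
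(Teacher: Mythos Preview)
Your argument is correct, and for (i)\,$\Leftrightarrow$\,(ii) and (ii)\,$\Rightarrow$\,(iii) it coincides with the paper's proof. For (iii)\,$\Rightarrow$\,(ii), however, you take a genuinely different route. The paper defines $p$ as the solution of the \emph{Robin} problem in \eqref{flow} (which exists and is unique by the assumed non-degeneracy), so that the third line of \eqref{flow} holds by construction, and then must show $V+p/Q=0$; this is done by building approximating harmonic functions $h_k\in H_{\partial\Omega}(\Omega\setminus A(t))$ solving the Robin problem on slightly smaller domains $A_k\Subset A(t)$ with right-hand side an extension of $V+p/Q$, using Lemma~\ref{lemma-nondegeneracy} for uniform $h^{2+\alpha}$ bounds. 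You instead define $p$ by the \emph{Dirichlet} data $p|_{\partial A(t)}=-VQ$, so that $V=-p/Q$ holds by construction, and then must show the Robin condition; you do this by testing against $G_\Omega(\cdot,y)$, $y\in A(t)$, and invoking the jump relation for the resulting single-layer Green potential to conclude $q\equiv 0$.

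Your approach has the advantage that it avoids solving the Robin problem altogether and does not rely on the perturbation stability of non-degeneracy (Lemma~\ref{lemma-nondegeneracy}); in particular your direction (iii)\,$\Rightarrow$\,(ii) does not actually use the non-degeneracy hypothesis. The paper's approach, on the other hand, makes the role of non-degeneracy more transparent and keeps all the analysis within the Schauder framework already set up, rather than importing the jump relations for layer potentials. Both are valid; yours is arguably more elementary.
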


\begin{proof}
Equivalency of (i) and (ii) is nothing but Proposition \ref{proposition-general_characterization}. 
For (ii) $\Rightarrow$ (iii), suppose that $\{A(t)\}$ solves \eqref{flow}. 
Then, for $h\in H_{\partial\Omega}(\Omega\setminus A(t))$, 
\begin{align*}
 m_h'(t)&=\int_{\partial A(t)}\frac{\partial Q}{\partial t}h\,d\sigma+\int_{\partial A(t)}\left(\frac{\partial Q}{\partial\nu}h+Q\frac{\partial h}{\partial\nu}\right)V\,d\sigma+\int_{\partial A(t)}QhHV\,d\sigma\\
 &=\int_{\partial A(t)}\left\{\frac{\partial Q}{\partial t}h-\left(\frac{1}{Q}\frac{\partial Q}{\partial\nu}+H\right)ph-\frac{\partial h}{\partial\nu}p\right\}\,d\sigma\\
 &=\int_{\partial A(t)}\left\{\frac{\partial Q}{\partial t}-\left(\frac{1}{Q}\frac{\partial Q}{\partial\nu}+H\right)p-\frac{\partial p}{\partial\nu}\right\}h\,d\sigma=0. 
\end{align*}
On the other hand, for (iii) $\Rightarrow$ (ii), assume that $m_h'(t)=0$, i.e., 
\begin{equation*}
 \int_{\partial A(t)}\frac{\partial Q}{\partial t}h\,d\sigma+\int_{\partial A(t)}\left(\frac{\partial Q}{\partial\nu}h+Q\frac{\partial h}{\partial\nu}+QhH\right)V\,d\sigma=0
\end{equation*}
holds for $h\in H_{\partial\Omega}(\Omega\setminus A(t))$, where $V$ is the speed of moving boundary $\partial A(t)$ in the outer normal direction with respect to $\Omega\setminus\overline{A(t)}$. 
The unique solution $p$ to the boundary value problem in \eqref{flow} satisfies
\begin{align*}
 \int_{\partial A(t)}\frac{\partial Q}{\partial t}h\,d\sigma&=\int_{\partial A(t)}\left\{\frac{\partial p}{\partial\nu}+\left(H+\frac{1}{Q}\frac{\partial Q}{\partial\nu}\right)p\right\}h\,d\sigma\\
 &=\int_{\partial A(t)}\left(Q\frac{\partial h}{\partial\nu}+QhH+\frac{\partial Q}{\partial\nu}h\right)\frac{p}{Q}\,d\sigma. 
\end{align*}
Combining these two equalities, we get
\begin{equation*}
 \int_{\partial A(t)}\left(\frac{\partial Q}{\partial\nu}h+Q\frac{\partial h}{\partial\nu}+QhH\right)\left(V+\frac{p}{Q}\right)\,d\sigma=0. 
\end{equation*}
Thus, the desired conclusion $V=-p/Q$ on $\partial A(t)$ will be obtained if there exists $h\in H_{\partial\Omega}(\Omega\setminus A(t))$ satisfying
\begin{equation*}
 \frac{\partial h}{\partial\nu}+\left(H+\frac{1}{Q}\frac{\partial Q}{\partial\nu}\right)h=V+\frac{p}{Q} \quad \textrm{on}\quad \partial A(t), 
\end{equation*}
or at least if there exists a sequence of $h_k\in H_{\partial\Omega}(\Omega\setminus A(t))$ satisfying
\begin{equation}
\label{convergence}
 \sup_{x\in\partial A(t)}\left|\left\{\frac{\partial h_k}{\partial\nu}+\left(H+\frac{1}{Q}\frac{\partial Q}{\partial\nu}\right)h_k\right\}-\left(V+\frac{p}{Q}\right)\right|\rightarrow 0 \quad (k\to\infty). 
\end{equation}
Note that, the existence of such an $h$ is almost equivalent to the non-degeneracy of $A(t)$, but additionally $h$ needs to be harmonic in a neighborhood of $\partial A(t)$. 
In order to construct $h_k$, we take a sequence of bounded domains $A_k\subset\overline{A_k}\subset A(t)$ such that $\partial A_k$ approximates $\partial A(t)$ in the $h^{3+\alpha}$ sense as $k\rightarrow\infty$. 
Denoting the mean curvature of $\partial A_k$ by $H_k$ and setting $f$ as an $h^{1+\alpha}$-extension of the function $V+p/Q$ on $\partial A(t)$ to $\mathbb{R}^n$, i.e., $f|_{\partial A(t)}=V+p/Q$, we define $h_k$ as a solution to
\begin{equation*}
 \left\{
 \begin{aligned}
  -\Delta h_k&=0&&\textrm{in} \quad \Omega\setminus \overline{A_k},\\
  h_k&=0&&\textrm{on} \quad \partial\Omega,\\
  \frac{\partial h_k}{\partial\nu}+\left(H_k+\frac{1}{Q}\frac{\partial Q}{\partial\nu}\right)h_k&=f&&\textrm{on} \quad \partial A_k. 
 \end{aligned}
 \right.
\end{equation*}
Then, since $A(t)$ is non-degenerate and this non-degeneracy is preserved under a small deformation (see Lemma \ref{lemma-nondegeneracy}), we see that $A_k$ are non-degenerate and there is a uniform (in $k$) constant $C>0$ such that
\begin{equation*}
 \|h_k\|_{h^{2+\alpha}(\overline{\Omega}\setminus A_k)}\leq C\|f\|_{h^{1+\alpha}(\overline{\Omega}\setminus A_k)}\leq C\|f\|_{h^{1+\alpha}(\mathbb{R}^n)}. 
\end{equation*}
Now \eqref{convergence} follows from this uniform estimate and the mean value theorem. 
\end{proof}

In the special case $\Omega=\mathbb{B}$, a special set of conserved quantities are enough for the characterization. 
Let $\mathcal{H}_k$ be the vector space of all homogeneous harmonic polynomials of degree $k\in\mathbb{N}\cup\{0\}$ on $\mathbb{R}^n$. 
The dimension $d_k^{(n)}$ of $\mathcal{H}_k$ is given by $d_0^{(n)}=1$, $d_1^{(n)}=n$ and 
\begin{equation*}
 d_k^{(n)}=\left(
 \begin{array}{c}
  k+n-1\\
  k
 \end{array}
 \right)-\left(
 \begin{array}{c}
  k+n-3\\
  k-2
 \end{array}
 \right) \quad (k\geq 2). 
\end{equation*}
We recall that, for $h_1\in\mathcal{H}_{k_1}$ and $h_2\in\mathcal{H}_{k_2}$ with $k_1\neq k_2$, $h_1$ and $h_2$ are orthogonal to each other with respect to the $L^2(\partial\mathbb{B})$ inner product, i.e., 
\begin{equation*}
  \langle h_1, h_2 \rangle_{L^2}:=\int_{\partial\mathbb{B}}h_1h_2\,d\sigma=0. 
\end{equation*}
Moreover, by choosing an orthonormal basis $\{h_{k,1}, h_{k,2}, \ldots, h_{k,d_k^{(n)}}\}$ of $\mathcal{H}_k$, it is known that
\begin{equation*}
 \bigcup_{k=0}^\infty \{h_{k,1}, h_{k,2}, \ldots, h_{k,d_k^{(n)}}\}
\end{equation*}
forms an orthonormal basis of $L^2(\partial\mathbb{B})$. 
Now we define the weighted moments $m_{k,l}(t)$ of $\partial A(t)$ for $k\in\mathbb{N}\cup\{0\}$ and $l=1,2,\ldots,d_k^{(n)}$ by
\begin{align*}
 m_{k,l}(t)&:=\int_{\partial A(t)}Q(x,t)H_{k,l}(x)\,d\sigma,\\
 H_{k,l}(x)&:=\left(1-|x|^{2-n-2k}\right)h_{k,l}(x) \quad (k\geq 1),\\
 H_{0,1}(x)&:=\left\{
 \begin{aligned}
  & \log|x| && (n=2),\\
  & 1-|x|^{2-n} && (n\geq 3). 
 \end{aligned}
 \right.
\end{align*}
Note that $H_{k,l}$ is the difference between $h_{k,l}$ and its Kelvin transform; and thus harmonic in $\mathbb{R}^n\setminus\{0\}$ and $H_{k,l}\in H_{\partial\mathbb{B}}(\mathbb{B}\setminus\{0\})$. 

\begin{corollary}
\label{corollary-first_integrals}
Let $\Omega=\mathbb{B}$ and assume that $A_0$ is connected and $0\in\bigcup_{0\leq s<T}A(s)$. 
Under the same assumption in Theorem \ref{theorem-characterization}, the conditions (i)--(iii) in Theorem \ref{theorem-characterization} are also equivalent to
\begin{enumerate}
\item[(iv)] $m_{k,l}(t)=m_{k,l}(0)$ for all $k\in\mathbb{N}\cup\{0\}$ and $l=1,\ldots,d_k^{(n)}$. 
\end{enumerate}
\end{corollary}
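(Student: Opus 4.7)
The plan is to prove (iii)$\,\Leftrightarrow\,$(iv), thereby extending the equivalence chain of Theorem \ref{theorem-characterization}. Under the standing assumption (which, in the shrinking regimes of Theorem \ref{theorem-local_foliation}, forces $0\in A(t)$ throughout the relevant time interval), each $H_{k,l}$---harmonic on $\mathbb{R}^n\setminus\{0\}$ and vanishing on $\partial\mathbb{B}$---belongs to $H_{\partial\mathbb{B}}(\mathbb{B}\setminus A(t))$. Hence (iii)$\,\Rightarrow\,$(iv) is immediate: applying (iii) to $h=H_{k,l}$ gives $m_{k,l}'(t)\equiv 0$.

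The substance is (iv)$\,\Rightarrow\,$(iii). Differentiating (iv) in $t$ and using linearity yields $m_h'(t)=0$ for every finite linear combination $h=\sum c_{k,l}H_{k,l}$; inspection of the boundary-integral formula for $m_h'(t)$ derived in the proof of Theorem \ref{theorem-characterization} shows that $m_h'(t)$ depends continuously on $(h,\partial_\nu h)|_{\partial A(t)}$, so the task reduces to approximating an arbitrary $h\in H_{\partial\mathbb{B}}(\mathbb{B}\setminus A(t))$ by such finite combinations in the $C^1(\partial A(t))$-topology.

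I build the approximants in two steps. With $K:=\overline{\mathbb{B}}\setminus A(t)$, its complement has exactly two connected components---the bounded $A(t)\ni 0$ and the unbounded $\mathbb{R}^n\setminus\overline{\mathbb{B}}$---so the Runge approximation theorem for harmonic functions, allowing a pole at $0$, produces $g_j$ harmonic on $\mathbb{R}^n\setminus\{0\}$ with $g_j\to h$ uniformly on $K$; interior estimates on a slightly larger compact subset of the common domain of harmonicity promote this to $C^k$-convergence on $K$. Because $h\equiv 0$ on $\partial\mathbb{B}$, the harmonic extensions $\phi_j$ to $\mathbb{B}$ of $g_j|_{\partial\mathbb{B}}$ satisfy $\|\phi_j\|_{L^\infty(\mathbb{B})}\to 0$, hence $\phi_j\to 0$ in $C^k(\partial A(t))$ by interior estimates. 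The corrected functions $\tilde g_j:=g_j-\phi_j$ are harmonic on $\mathbb{B}\setminus\{0\}$, vanish on $\partial\mathbb{B}$, and still satisfy $\tilde g_j\to h$ in $C^k(\partial A(t))$. Kelvin reflection across $\partial\mathbb{B}$---legitimate precisely because $\tilde g_j$ vanishes there---extends $\tilde g_j$ to a function $\hat g_j$ harmonic on $\mathbb{R}^n\setminus\{0\}$. The harmonic functions on $\mathbb{R}^n\setminus\{0\}$ vanishing on $\partial\mathbb{B}$ are precisely the Kelvin-odd ones, and their Laurent/spherical-harmonic expansion collapses onto the family $\{H_{k,l}\}$ (with the logarithmic mode $H_{0,1}=\log|x|$ covering the exceptional case $n=2$, $k=0$), giving $\hat g_j=\sum_{k,l}a^{(j)}_{k,l}H_{k,l}$ with partial sums converging in $C^k$ on compact subsets of $\mathbb{R}^n\setminus\{0\}$, in particular on $\partial A(t)$. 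A diagonal extraction then produces finite combinations $h_j\to h$ in $C^1(\partial A(t))$; by (iv) and linearity $m_{h_j}'(t)=0$, and the continuity noted above yields $m_h'(t)=0$ in the limit.

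The main obstacle is the density assertion that $\mathrm{span}\{H_{k,l}\}$ is $C^1(\partial A(t))$-dense in $H_{\partial\mathbb{B}}(\mathbb{B}\setminus A(t))$. This combines three ingredients: Runge-type harmonic approximation with a prescribed pole at $0\in A(t)$ (which is where the connectedness of $A(t)$ and the location of $0$ enter), the Kelvin reflection across $\partial\mathbb{B}$ that singles out the basis $\{H_{k,l}\}$ among all admissible Laurent modes on $\mathbb{R}^n\setminus\{0\}$, and the interior regularity of harmonic functions needed to upgrade uniform approximation to $C^1$ on $\partial A(t)$.
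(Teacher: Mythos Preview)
Your argument is correct and follows essentially the same route as the paper's proof: Runge approximation with pole at $0\in A(t)$, subtraction of the harmonic Dirichlet extension to force vanishing on $\partial\mathbb{B}$, Kelvin reflection across $\partial\mathbb{B}$ to extend to $\mathbb{R}^n\setminus\{0\}$, and identification of the Laurent expansion with the basis $\{H_{k,l}\}$. The paper states only that uniform approximation on $\partial A(t)$ ``suffices,'' whereas you correctly observe that the formula for $m_h'(t)$ involves $\partial_\nu h$ and hence $C^1(\partial A(t))$-approximation is what is actually needed; you supply this by applying Runge on a slightly enlarged compact set and invoking interior estimates for harmonic functions, which is the natural fix and is implicit in the paper's setup. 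One small imprecision: you write that interior estimates promote convergence to $C^k$ on all of $K=\overline{\mathbb{B}}\setminus A(t)$, but near $\partial\mathbb{B}$ the function $h$ is merely continuous, so this holds only away from $\partial\mathbb{B}$; since you only use $C^1$-convergence on $\partial A(t)$, this does not affect the argument.
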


\begin{proof}
It suffices to prove that $h\in H_{\partial\mathbb{B}}(\mathbb{B}\setminus A(t))$ can be approximated uniformly on $\partial A(t)$ by a linear combination of $H_{k,l}$'s. 
Since $A(t)$ is topologically equivalent to $A_0$, Runge's theorem (see Armitage and Gardiner \cite[Theorem 2.6.4]{armitage_gardiner-01} for a higher dimensional version) deduces that, for any $\varepsilon>0$, $|h-H|<\varepsilon$ holds on $\overline{\mathbb{B}}\setminus A(t)$ for some harmonic function $H$ in $\mathbb{R}^n\setminus\{0\}$. 
Adding a harmonic function $\tilde{H}$ in $\mathbb{B}$ with $\tilde{H}=-H$ on $\partial\mathbb{B}$, we see that $H+\tilde{H}\in H_{\partial\mathbb{B}}(\mathbb{B}\setminus\{0\})$ and $|H+\tilde{H}-h|<2\varepsilon$ on $\overline{\mathbb{B}}\setminus A(t)$. 
Moreover, $H+\tilde{H}$ is harmonically extended to $\mathbb{R}^n\setminus\{0\}$ by the Kelvin transform and the Schwarz reflection principle; and thus it has a unique Laurent expansion of the form
\begin{equation*}
 H+\tilde{H}=\sum_{k=0}^\infty\sum_{l=1}^{d_k^{(n)}}\alpha_{k,l}h_{k,l}+\sum_{k=0}^\infty\sum_{l=1}^{d_k^{(n)}}\beta_{k,l}|x|^{2-n-2k}h_{k,l} \quad (n\geq 3). 
\end{equation*}
Here, $H+\tilde{H}=0$ on $\partial\mathbb{B}$ implies $\alpha_{k,l}+\beta_{k,l}=0$. 
Thus, 
\begin{equation*}
 \left|\sum_{k=0}^K\sum_{l=1}^{d_k^{(n)}}\alpha_{k,l}H_{k,l}-h\right|\leq 3\varepsilon \quad \text{on} \ \partial A(t)
\end{equation*}
for sufficiently large $K$. 
The case $n=2$ is similar, with a logarithmic term. 
\end{proof}

Characterization (iv) can be thought of as a quadrature identity or moment conservation law and it leads to an interesting connection to integrable systems as the Hele-Shaw flow (see Sakai \cite{sakai-82}, Gustafsson and Shapiro \cite{gustafsson_shapiro-05}). 
In fact, as we shall state below, we can also give a characterization of solution $A$ to the ``stationary'' problem \eqref{bernoulli} itself by a quadrature identity. 
Theorem \ref{theorem-characterization} can be alternatively proved by using this identity. 

\begin{proposition}
Let $Q(x)\in h^{1+\alpha}(\overline{\Omega})$ and $\partial\Omega,\partial A\in h^{2+\alpha}$. 
Then, the following are equivalent: 
\begin{enumerate}[(a)]
\item $A$ is a solution to Bernoulli's free boundary problem \eqref{bernoulli}; 
\item For any $h\in H_{\partial\Omega}(\Omega\setminus A)$, 
\begin{equation}
\label{quadrature_identity}
 \int_{\partial A}Qh\,d\sigma=\int_{\partial A}\frac{\partial h}{\partial\nu}\,d\sigma. 
\end{equation}
\end{enumerate}
Moreover, if one of the conditions above holds, the solution $u$ to \eqref{bernoulli} is given by
\begin{equation}
\label{green_representation}
 u(x)=\int_{\partial A}Q(y)G_\Omega(x,y)\,d\sigma(y), 
\end{equation}
where $G_\Omega$ denotes the Green's function of $\Omega$ for the Dirichlet-Laplacian. 
\end{proposition}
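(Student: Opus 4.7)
The plan is to read both implications off Green's second identity on the annular region $\Omega\setminus\overline{A}$, and to obtain \eqref{green_representation} by the same identity with $h$ replaced by $G_\Omega(x,\cdot)$.

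For (a)$\Rightarrow$(b), I would pair the capacitary potential $u$ solving \eqref{bernoulli} with an arbitrary $h\in H_{\partial\Omega}(\Omega\setminus A)$. Since both are harmonic in $\Omega\setminus\overline{A}$, Green's identity collapses to the boundary integrals. The contributions on $\partial\Omega$ vanish because $u=h=0$ there, while on $\partial A$ (with the paper's $\nu$, outward to the annulus and hence pointing into $A$) one has $u=1$ and $\partial_\nu u=Q$, so what remains is exactly $\int_{\partial A}(\partial_\nu h-Qh)\,d\sigma=0$, which is \eqref{quadrature_identity}.

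For (b)$\Rightarrow$(a), I would first let $u\in h^{2+\alpha}(\overline{\Omega}\setminus A)$ be the unique solution of the first three equations of \eqref{bernoulli} (the capacitary potential of $A$ in $\Omega$), noting that $\partial_\nu u\in h^{1+\alpha}(\partial A)$ by Schauder theory. The same Green's identity applied to $u$ and $h$ now gives $\int_{\partial A}(\partial_\nu h-h\,\partial_\nu u)\,d\sigma=0$ for every $h\in H_{\partial\Omega}(\Omega\setminus A)$, and subtracting \eqref{quadrature_identity} yields
\begin{equation*}
 \int_{\partial A}(\partial_\nu u-Q)\,h\,d\sigma=0 \qquad \text{for all }h\in H_{\partial\Omega}(\Omega\setminus A).
\end{equation*}
To upgrade this to the pointwise identity $\partial_\nu u=Q$, I would use a Runge-type density argument modelled on the approximation at the end of the proof of Theorem~\ref{theorem-characterization}: setting $\psi:=\partial_\nu u-Q\in h^{1+\alpha}(\partial A)$, I extend $\psi$ to some $\tilde\psi\in h^{1+\alpha}(\mathbb{R}^n)$, pick smooth domains $A_k\subset\overline{A_k}\subset A$ with $\partial A_k\to\partial A$ in $h^{2+\alpha}$, and let $h_k$ be the unique harmonic function in $\Omega\setminus\overline{A_k}$ with $h_k=0$ on $\partial\Omega$ and $h_k=\tilde\psi$ on $\partial A_k$. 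Each $h_k$ lies in $H_{\partial\Omega}(\Omega\setminus A)$, and continuous dependence of the Dirichlet problem on its domain forces $h_k|_{\partial A}\to\psi$ uniformly, so the displayed identity applied to $h_k$ gives $\int_{\partial A}\psi^2\,d\sigma=0$ in the limit and hence $\psi\equiv 0$. This density step is the only genuinely technical point of the argument, but it requires no new machinery beyond standard Schauder estimates and the Hausdorff convergence of $\partial A_k$ to $\partial A$.

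For the representation formula \eqref{green_representation}, I would apply Green's identity to $u$ and $G_\Omega(x,\cdot)$ on the punctured annulus $(\Omega\setminus\overline{A})\setminus B_\varepsilon(x)$ and send $\varepsilon\to 0$. Harmonicity kills the bulk term, the identity $u=G_\Omega(x,\cdot)=0$ on $\partial\Omega$ kills the outer contribution, the integral over $\partial B_\varepsilon(x)$ tends to $u(x)$ by the standard fundamental-solution computation, and the boundary conditions $u=1$, $\partial_\nu u=Q$ on $\partial A$ leave $\int_{\partial A}\bigl(\partial_{\nu_y}G_\Omega(x,y)-Q(y)G_\Omega(x,y)\bigr)\,d\sigma(y)$. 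The crucial observation is that, since $x\notin\overline{A}$, the function $G_\Omega(x,\cdot)$ is harmonic throughout $A$; applying the divergence theorem on $A$ (remembering that the paper's $\nu$ on $\partial A$ points \emph{into} $A$) therefore gives $\int_{\partial A}\partial_{\nu_y}G_\Omega(x,y)\,d\sigma(y)=0$, and \eqref{green_representation} drops out.
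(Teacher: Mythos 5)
Your (a)$\Rightarrow$(b) step is identical to the paper's: Green's second identity on $\Omega\setminus\overline{A}$, with the $\partial\Omega$ contributions dying because $u=h=0$ there. The interesting divergence is in (b)$\Rightarrow$(a). The paper goes the other way around: it \emph{defines} $u$ by the single-layer potential \eqref{green_representation}, observes that $-\Delta u=Q\,\mathcal{H}^{n-1}\lfloor\partial A$ in $\Omega$ with $u=0$ on $\partial\Omega$, so the jump relation for single-layer potentials automatically yields the Neumann datum, and then checks the remaining Dirichlet condition $u=1$ on $\partial A$ by plugging the admissible test function $h=G_\Omega(z,\cdot)$, $z\in A$, into \eqref{quadrature_identity} (which gives $u(z)=1$ directly, since $\int_{\partial A}\partial_\nu G_\Omega(z,\cdot)\,d\sigma=1$ when $z\in A$). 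Your route instead starts with the capacitary potential, uses Green's identity a second time to turn the quadrature identity into the orthogonality relation $\int_{\partial A}(\partial_\nu u-Q)h\,d\sigma=0$ for every admissible $h$, and then invokes a Runge-type approximation to deduce $\partial_\nu u\equiv Q$. Both are correct, and your density step is sound: the comparison pull-backs $\Psi_k$ with $\Psi_k\to\operatorname{id}$, uniform Schauder bounds for the $h_k$, and the interior regularity of $h_k$ near $\partial A\subset\Omega\setminus\overline{A_k}$ make the trace convergence $h_k|_{\partial A}\to\psi$ go through. The trade-off is that your argument replicates the approximation technology already deployed in the proof of Theorem~\ref{theorem-characterization}, whereas the paper avoids any density argument here by the felicitous choice $h=G_\Omega(z,\cdot)$; in exchange, the paper must quote the jump relation for single-layer potentials, which your version does not need. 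Finally, your separate derivation of \eqref{green_representation} via Green's identity on the punctured annulus is correct (including the sign bookkeeping with $\nu$ pointing into $A$, and the observation that $\int_{\partial A}\partial_\nu G_\Omega(x,\cdot)\,d\sigma=0$ when $x\notin\overline{A}$); in the paper this formula is not derived afterwards but is the very starting point of (b)$\Rightarrow$(a).
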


\begin{proof}
The assertion (a) $\Rightarrow$ (b) follows from integration by parts: 
\begin{equation*}
 \int_{\partial A}Qh\,d\sigma=\int_{\partial A}\frac{\partial u}{\partial\nu}h\,d\sigma=\int_{\partial A}\frac{\partial h}{\partial\nu}\,d\sigma. 
\end{equation*}
For (b) $\Rightarrow$ (a), it suffices to check that $u$ defined by \eqref{green_representation} solves \eqref{bernoulli}. 
Since
\begin{equation*}
\left\{
\begin{aligned}
 -\Delta u&=Q\mathcal{H}^{n-1}\lfloor\partial A && \textrm{in} \quad \Omega, \\
 u&=0 && \textrm{on} \quad \Omega, 
\end{aligned}
\right.
\end{equation*}
where $\mathcal{H}^{n-1}\lfloor\partial A$ is the $(n-1)$-dimensional Hausdorff measure restricted to $\partial A$, and this singular measure causes the derivative jump
\begin{equation*}
 \lim_{z\notin\overline{A}, z\to x}\frac{\partial u}{\partial\nu}(z)-\lim_{z\in A, z\to x}\frac{\partial u}{\partial\nu}(z)=Q(x) \quad (x\in\partial A), 
\end{equation*}
it remains to show that $u=1$ on $\partial A$ (which implies $u=1$ on $\overline{A}$). 
But this can be checked by choosing $h(y)=G_\Omega(z,y)$ with $z\in A$ in \eqref{quadrature_identity} and then taking the limit $z\rightarrow x\in\partial A$. 
\end{proof}

\begin{remark}
{\rm 
All of the preceding arguments also work for $C^{k+\alpha}$ instead of $h^{k+\alpha}$. 
}
\end{remark}

\section{Existence of foliated solutions}
\label{section-solvability}

We shall complete our program in Section \ref{section-ift} for Bernoulli's problem \eqref{bernoulli} and thus prove Theorem \ref{theorem-local_foliation}. 
By Theorem \ref{theorem-characterization}, the remaining task is to prove the local-in-time solvability of evolution equation \eqref{flow} in the regularity class \eqref{regularity_condition}. 
Since $h^{k+\alpha}(\partial A_0)$ is a continuous interpolation space, it suffices to check that
\begin{equation}
\label{def_of_phi}
 \Phi(\rho,t):=\partial_\rho F(\rho,t)^{-1}\partial_tF(\rho,t)
\end{equation}
meets the required conditions in Proposition \ref{proposition-solvability}. 
Thus Theorem \ref{theorem-local_foliation} is obtained as a consequence of the following proposition, in which
\begin{equation*}
 \mathcal{U}_\varepsilon(\rho_0):=\left\{\rho\in\mathcal{U}_\gamma \mid \|\rho-\rho_0\|_{h^{3+\alpha}(\partial A_0)}<\varepsilon\right\}. 
\end{equation*}
Note that we require the higher regularity $Q\in h^{3+\alpha}$ as compared to the previous section to ensure that $\Phi\in C^1$. 

\begin{proposition}
\label{proposition-generation}
Let $\partial\Omega\in h^{2+\alpha}$, $Q\in h^{3+\alpha}(\overline{\Omega}\times[0,\infty))$, $Q>0$ and $\rho_0\in\mathcal{U}_\gamma$. 
Then, for sufficiently small $\varepsilon>0$, $\Phi \in C^1(\mathcal{U}_\varepsilon(\rho_0)\times[0,\varepsilon),h^{2+\alpha}(\partial A_0))$ and
\begin{equation*}
 \partial_{\rho}\Phi(\rho,t)\in {\rm Hol}(h^{3+\alpha}(\partial A_0),h^{2+\alpha}(\partial A_0))
\end{equation*}
if one of the following conditions holds true: 
\begin{enumerate}[(A)]
\item $A_{\rho_0}$ is non-degenerate, hyperbolic and monotone for $Q(\cdot,0)$, and $\partial_tQ>0$; 
\item $A_{\rho_0}$ is non-degenerate, elliptic and monotone for $Q(\cdot,0)$, and $\partial_t Q<0$. 
\end{enumerate}
\end{proposition}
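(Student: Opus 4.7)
The plan is to verify the hypotheses of Proposition \ref{proposition-solvability} for the mapping $\Phi$ defined in \eqref{def_of_phi}. I would first prove the non-degeneracy-preservation statement referenced earlier as Lemma \ref{lemma-nondegeneracy}: Fredholm theory for the Robin problem \eqref{linear_bernoulli}, combined with the non-degeneracy of $A_{\rho_0}$, shows that the extended operator $L(\rho, t) := \partial_\rho F(\rho, t) \in \mathcal{L}(h^{2+\alpha}(\partial A_0), h^{1+\alpha}(\partial A_0))$ is invertible for $(\rho, t)$ near $(\rho_0, 0)$, with the explicit representation $L(\rho, t)^{-1}\varphi = -p/Q$ from \eqref{inverse_operator}. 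The upgraded hypothesis $Q \in h^{3+\alpha}$ raises $F$ from $C^1$ to $C^2$ in the appropriate operator topologies, whence $\Phi \in C^1(\mathcal{U}_\varepsilon(\rho_0) \times [0, \varepsilon), h^{2+\alpha}(\partial A_0))$.

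The heart of the argument is the identification of the principal symbol of $\partial_\rho \Phi$. Differentiating $\Phi = L(\rho, t)^{-1}\partial_t F(\rho, t)$ yields
\begin{equation*}
\partial_\rho \Phi[\tilde\rho] = -L^{-1}\,L'[\tilde\rho]\,\Phi + L^{-1}\partial_\rho\partial_t F[\tilde\rho].
\end{equation*}
By Proposition \ref{proposition-linearized_operator}, $L$ is a first-order pseudodifferential operator on $\partial A_0$ with principal symbol $-Q|\xi|$ (the Dirichlet-to-Neumann map composed with multiplication by $Q$), so $L^{-1}$ has order $-1$ with principal symbol $-1/(Q|\xi|)$. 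In differentiating $L\phi = -S(Q\phi) - (HQ + \partial_\nu Q)\phi$ in $\rho$ (the tangential term vanishes because $F(\rho_0, 0) = 0$), the only contribution to $L'[\tilde\rho]\phi$ carrying two derivatives of $\tilde\rho$ is $-(\partial_\rho H)[\tilde\rho]\,Q\phi$, since the shape derivative of the mean curvature in the paper's convention reads
\begin{equation*}
\partial_\rho H[\tilde\rho] = -\Delta_{\partial A_{\rho_0}}\tilde\rho - |\mathrm{II}|^2\,\tilde\rho
\end{equation*}
with principal symbol $|\xi|^2$, whereas the shape derivative of the Dirichlet-to-Neumann map $S$ and the normal variations of $Q$, $H$, $\partial_\nu Q$ yield only first-order operators in $\tilde\rho$. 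Composing with the order-$(-1)$ operator $L^{-1}$ produces the clean first-order principal symbol
\begin{equation*}
\sigma_{\mathrm{pr}}(\partial_\rho\Phi)(x, \xi) = -\Phi(x)\,|\xi|.
\end{equation*}

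By \eqref{inverse_operator}, $\Phi(\rho_0, 0) = p/Q$ on $\partial A_{\rho_0}$, where $p$ is the Robin solution of \eqref{linear_bernoulli} with $\varphi = \partial_t Q(\cdot, 0)$. In case (A) the hyperbolic monotone hypothesis forces $p < 0$ for $\varphi \equiv 1$, which by linearity extends to $\varphi = \partial_t Q > 0$; in case (B) the elliptic monotone hypothesis gives $p > 0$ for $\varphi \equiv 1$, and thus $p < 0$ when $\varphi = \partial_t Q < 0$. Hence $\Phi < 0$ strictly on $\partial A_0$ in both cases, so $-\Phi|\xi|$ is a strictly positive first-order elliptic symbol. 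This makes $\partial_\rho\Phi(\rho_0, 0)$ sectorial as an unbounded operator on $h^{2+\alpha}(\partial A_0)$ with domain $h^{3+\alpha}(\partial A_0)$ (low-lying eigenvalues of either sign are absorbed by an admissible shift), and the Da Prato--Grisvard theory \cite{daprato_grisvard-79} for continuous interpolation spaces supplies the resolvent estimates in the little H\"older setting. Therefore $\partial_\rho\Phi(\rho_0, 0) \in {\rm Hol}(h^{3+\alpha}(\partial A_0), h^{2+\alpha}(\partial A_0))$; perturbation stability of analytic-semigroup generation carries this to all $(\rho, t) \in \mathcal{U}_\varepsilon(\rho_0) \times [0, \varepsilon)$ for $\varepsilon$ small.

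The main obstacle is the principal-symbol computation: one must recognize that the mean-curvature variation is the sole source of second derivatives of $\tilde\rho$ in $L'[\tilde\rho]\Phi$, and then track how the factor $-Q\Phi$ carried along from the mean-curvature term interacts with the principal symbol of $L^{-1}$ so as to leave the clean residual $-\Phi|\xi|$. A secondary technical point is verifying sectoriality estimates in the non-Hilbertian little H\"older spaces, which is precisely where the continuous-interpolation framework of \cite{daprato_grisvard-79} and the harmonic-analysis tools alluded to in the introduction enter; in the ball case these reduce to explicit spherical-harmonic computations that independently confirm the sign of the spectrum at high modes.
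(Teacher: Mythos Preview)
Your strategy is essentially the paper's: isolate the principal part of $\partial_\rho\Phi$ as a first-order operator coming from the mean-curvature variation composed with the (order $-1$) inverse Robin solver, check that the coefficient has a good sign under (A) or (B), and deduce analytic-semigroup generation. The paper writes this as $\Pi(\rho,t)=-M_1 T M_2 P$ and then carries out the localization and Fourier-multiplier computation explicitly (its Steps 2--3, with the Mikhlin-type estimate and the partition-of-unity patching, relying on \cite{onodera-15}); your pseudodifferential symbol calculus is the same content in compressed form. Where you say ``the Da Prato--Grisvard theory \ldots supplies the resolvent estimates,'' be aware that this is exactly the technical work the paper does by hand in the little H\"older scale --- it is not automatic from the symbol alone, and your proposal would need to invoke an equivalent machinery at that point.

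One genuine slip: the passage from $T[1]<0$ to $T[\partial_tQ]<0$ (and the elliptic analogue) is \emph{not} ``by linearity.'' Linearity only gives $T[c]=cT[1]$ for constants $c$; it says nothing about $T[\varphi]$ for a variable $\varphi>0$, since the Robin coefficient $H+Q^{-1}\partial_\nu Q$ need not have a sign and there is no comparison principle available. The paper also asserts the sign of $T(\rho_0,0)\theta_{\rho_0}^\ast\partial_tQ$ without a detailed argument, but your stated justification is incorrect and should be replaced (or the hypothesis strengthened) before the sign of the principal coefficient $-\Phi$ can be claimed.
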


\begin{remark}
{\rm 
Depending on the hyperbolicity/ellipticity of $A_{\rho}$, the linearized operator has the opposite sign, which reflects in assumption $\partial_tQ\gtrless 0$. 
}
\end{remark}

In order to prove Proposition \ref{proposition-generation}, it is more convenient to use a functional analytic representation of \eqref{evolution_eq} than the geometric representation \eqref{flow}. 
Let $A_0\subset\overline{A_0}\subset\Omega$ be a smooth reference domain and define $\mathcal{U}_\gamma$, $A_\rho$, $\theta_\rho$ and $N_\rho$ as in Section \ref{section-linearized_problem}. 
Using the pull-back and push-forward operators $\theta_\rho^\ast, \theta_\ast^\rho$, we define
\begin{align*}
 L(\rho)v&:=\theta_\rho^\ast(-\Delta)\theta_\ast^\rho v\in h^{\alpha}(\overline{\Omega}\setminus A_0),\\
 B(\rho,t)v&:=\left(\left\langle\left(\theta_\rho^\ast\nabla\theta_\ast^\rho v\right)|_{\partial A_0}, \nu_\rho\right\rangle+M_vH(\rho)+M_{v(\theta_\rho^\ast Q)^{-1}}\left\langle\left(\theta_\rho^\ast\nabla Q\right)|_{\partial A_0}, \nu_\rho\right\rangle, v|_{\partial\Omega}\right)\\
 &\hspace{8cm}\in h^{1+\alpha}(\partial A_0)\times h^{2+\alpha}(\partial\Omega),\\
 S(\rho,t)f&:=(L(\rho),B(\rho,t))^{-1}(f,0,0)\in h^{2+\alpha}(\overline{\Omega}\setminus A_0),\\
 T(\rho,t)\varphi&:=(L(\rho),B(\rho,t))^{-1}(0,\varphi,0)\in h^{2+\alpha}(\overline{\Omega}\setminus A_0)
\end{align*}
for $v\in h^{2+\alpha}(\overline{\Omega}\setminus A_0)$, $f\in h^{\alpha}(\overline{\Omega}\setminus A_0)$ and $\varphi\in h^{1+\alpha}(\partial A_0)$, where $\langle\cdot,\cdot\rangle$ denotes the inner product in $\mathbb{R}^n$, $M_\psi$ is the pointwise multiplication operator defined by 
\begin{equation*}
 (M_\varphi \psi)(\zeta):=\varphi(\zeta)\psi(\zeta) \quad (\zeta\in\partial A_0), 
\end{equation*}
$H(\rho)\in h^{1+\alpha}(\partial A_0)$ assigns the mean curvature of $\partial A_\rho$ at $\theta_\rho(\zeta)$ to each $\zeta\in\partial A_0$, and $\nu_\rho\in h^{2+\alpha}(\partial A_0,\mathbb{R}^n)$ denotes the outer normal vector field represented by
\begin{equation*}
 \nu_\rho(\zeta):=\frac{\nabla N_\rho(\theta_\rho(\zeta))}{|\nabla N_\rho(\theta_\rho(\zeta))|} \quad (\zeta\in\partial A_0). 
\end{equation*}
The following lemma shows that the solution operators $S(\rho,t)$, $T(\rho,t)$ are well-defined whenever $A_\rho$ is non-degenerate, and moreover under the condition in Proposition \ref{proposition-generation}, $\Phi(\rho,t)$ is defined in $\mathcal{U}_{\varepsilon}(\rho_0)\times[0,\varepsilon)$ for small $\varepsilon>0$. 

\begin{lemma}
\label{lemma-nondegeneracy}
Let $\rho,\rho_0\in\mathcal{U}_{\gamma}$. 
Then, 
\begin{enumerate}[(i)]
\item $(L(\rho),B(\rho,t))\in\mathcal{L}(h^{2+\alpha}(\overline{\Omega}\setminus A_0),h^\alpha(\overline{\Omega}\setminus A_0)\times h^{1+\alpha}(\partial A_0)\times h^{2+\alpha}(\partial\Omega))$ is invertible if and only if $A_{\rho}$ is non-degenerate for $Q(\cdot,t)$. 
\item If $A_{\rho_0}$ is non-degenerate, hyperbolic (elliptic) and monotone for $Q(\cdot,0)$, then so is $A_{\rho}$ for $Q(\cdot,t)$, if $\rho\in \mathcal{U}_\varepsilon(\rho_0)$ and $0\leq t<\varepsilon$ for some small $\varepsilon>0$. 
\end{enumerate}
\end{lemma}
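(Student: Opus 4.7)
The plan is to reduce (i) to the Fredholm theory of mixed Dirichlet--Robin problems on the annular domain $\Omega\setminus\overline{A_\rho}$, and then to obtain (ii) as a perturbation statement via the openness of invertibility and of strict sign conditions on compact sets.

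For (i), I would first unwind the definitions and observe that, via $u:=\theta_\ast^\rho v$, the equation $(L(\rho), B(\rho,t))v=(f,\varphi,\psi)$ is equivalent to the classical boundary value problem
\begin{equation*}
 -\Delta u=\theta_\ast^\rho f\ \textrm{in}\ \Omega\setminus\overline{A_\rho},\quad u=\psi\circ\theta_\rho^{-1}\ \textrm{on}\ \partial\Omega,\quad \frac{\partial u}{\partial \nu_\rho}+\left(H+\frac{\partial_{\nu_\rho} Q}{Q}\right)u=\theta_\ast^\rho\varphi\ \textrm{on}\ \partial A_\rho.
\end{equation*}
This BVP satisfies the Shapiro--Lopatinski complementing condition, so Schauder theory on little H\"older spaces yields that the associated operator is Fredholm. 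To identify the index I would compare with the reference operator in which the zeroth-order boundary term $H+Q^{-1}\partial_{\nu_\rho}Q$ is dropped; that operator is invertible by Lax--Milgram on $H^1$ with zero Dirichlet trace on $\partial\Omega$ (the Dirichlet condition rules out constants in the kernel), followed by Schauder regularity. The difference between the two operators is multiplication by a bounded $h^{1+\alpha}$ function acting on the boundary trace, and hence compact via the embedding $h^{2+\alpha}(\partial A_\rho)\hookrightarrow h^{1+\alpha}(\partial A_\rho)$. Therefore $(L(\rho), B(\rho,t))$ is Fredholm of index $0$, and its kernel, pulled back to $\Omega\setminus\overline{A_\rho}$, coincides with the solution space of \eqref{linear_bernoulli} for $\varphi\equiv 0$. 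Invertibility is thus equivalent to non-degeneracy of $A_\rho$.

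For (ii), the map $(\rho, t)\mapsto (L(\rho), B(\rho, t))$ is continuous in operator norm, which follows from the continuous dependence of $\theta_\rho$, $\nu_\rho$ and $H(\rho)$ on $\rho$ in the appropriate norms together with the continuity of $Q(\cdot, t)$ in $t$. Since the invertible operators form an open set in $\mathcal{L}(h^{2+\alpha},h^\alpha\times h^{1+\alpha}\times h^{2+\alpha})$, the non-degeneracy of $A_{\rho_0}$ for $Q(\cdot, 0)$ given by (i) persists for all $(\rho, t)$ in a neighbourhood of $(\rho_0, 0)$. For monotone hyperbolicity/ellipticity I would then consider the solution $p_{\rho, t}$ of \eqref{linear_bernoulli} with $\varphi\equiv 1$, $A=A_\rho$ and $Q=Q(\cdot,t)$: it is obtained by applying $(L(\rho), B(\rho, t))^{-1}$ to data depending continuously on $(\rho, t)$, so its transplant $\theta_\rho^\ast p_{\rho,t}$ depends continuously on $(\rho, t)$ with values in $h^{2+\alpha}(\overline{\Omega}\setminus A_0)$. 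Its trace on the compact set $\partial A_0$ is therefore continuous in $C^0$, so the strict inequality $p_{\rho_0, 0}\lessgtr 0$ on $\partial A_{\rho_0}$ is preserved, giving $p_{\rho, t}\lessgtr 0$ on $\partial A_\rho$ for $(\rho, t)$ close to $(\rho_0, 0)$.

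The main obstacle is the Fredholm-index claim in (i): the boundary coefficient $H+Q^{-1}\partial_{\nu_\rho}Q$ has no a priori sign, so one cannot argue coercivity of the full operator directly. One must carefully exhibit a coercive reference BVP and verify that the difference really is a compact operator on little H\"older scales, a point that ultimately rests on the Arzel\`a--Ascoli compactness of $h^{2+\alpha}\hookrightarrow h^{1+\alpha}$ on the boundary trace. Everything else in the lemma is then a soft consequence of operator-theoretic continuity and the stability of strict inequalities on a compact hypersurface.
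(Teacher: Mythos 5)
Your argument is correct and follows essentially the same strategy as the paper: reduce (i) to a Fredholm alternative for the mixed Dirichlet--Robin problem by comparing with a coercive reference problem and showing the difference is compact (via the compact embedding $h^{2+\alpha}(\partial A_0)\hookrightarrow h^{1+\alpha}(\partial A_0)$), and obtain (ii) from the openness of the set of invertible operators and the continuity of $(\rho,t)\mapsto T(\rho,t)[1]$. The only cosmetic difference is the choice of reference operator: the paper keeps the Robin condition but adds a large constant $\mu$ to the boundary coefficient so that $H+Q^{-1}\partial_\nu Q+\mu>0$ (yielding invertibility by Agmon--Douglis--Nirenberg or by coercivity of the associated bilinear form), whereas you drop the zeroth-order boundary term altogether and use that the pure Neumann problem on $\partial A_\rho$ coupled with Dirichlet on $\partial\Omega$ is coercive because of the Poincar\'e inequality on $H^1_{0,\partial\Omega}$. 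Both yield the same compact perturbation argument, so the two proofs are equivalent in substance.
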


\begin{proof}
Let us first observe that $(L(\rho), B_\mu(\rho,t))$ is invertible for large $\mu>0$, where 
\begin{equation}
\label{b_mu}
 B_\mu(\rho,t)v:=B(\rho,t)v+(\mu v|_{\partial A_0}, 0). 
\end{equation}
Indeed, this corresponds to the unique solvability of 
\begin{equation}
\label{coercivity_of_bilinear_form}
 \left\{
 \begin{aligned}
  -\Delta p&=f && \textrm{in} \quad \Omega\setminus \overline{A_{\rho}},\\
  p&=\varphi_1 && \textrm{on} \quad \partial\Omega,\\
  \frac{\partial p}{\partial\nu}+\left(H+\frac1Q\frac{\partial Q}{\partial\nu}+\mu\right)p&=\varphi_2 && \textrm{on} \quad \partial A_{\rho}
 \end{aligned}
 \right. 
\end{equation}
in $h^{2+\alpha}(\overline{\Omega}\setminus A_{\rho})$, and this follows if $H+Q^{-1}\partial_\nu Q+\mu>0$ on $\partial A_{\rho}$, since the complementing boundary condition of Agmon, Douglis and Nirenberg \cite{agmon_douglis_nirenberg-64} is satisfied (see also Gilbarg and Trudinger \cite[Theorem 6.31]{gilbarg_trudinger-98}). 
Now take any $p\in h^{2+\alpha}(\overline{\Omega}\setminus A_0)$ and set $v:=(L(\rho),B_\mu(\rho,t))^{-1}(L(\rho),B(\rho,t))p$ and subtract the corresponding equations as \eqref{coercivity_of_bilinear_form} satisfied by $p$ and $v$ to find that
\begin{equation*}
 \left\{\left(L(\rho),B_\mu(\rho,t)\right)^{-1}\left(L(\rho),B(\rho,t)\right)-I\right\}p=-\mu\left(B(\rho),B_\mu(\rho,t)\right)^{-1}(0,p,0)=:Kp, 
\end{equation*}
where $I$ is the identity and $K$ is a compact operator on $h^{2+\alpha}(\overline{\Omega}\setminus A_{0})$. 
Hence (i) follows from the Fredholm alternative applied to
\begin{equation*}
 I+K=(L(\rho), B_\mu(\rho,t))^{-1}(L(\rho), B(\rho,t)): 
\end{equation*}
the operator $(L(\rho),B(\rho,t))$ is bijective if and only if it is injective, i.e., if $A_{\rho}$ is non-degenerate. 

The non-degeneracy statement in (ii) readily follows from the fact that the set of invertible operators is open in the space of bounded operators with the operator norm topology, since $(\rho,t)\mapsto (L(\rho),B(\rho,t))$ is continuous. 
The hyperbolicity (ellipticity) and monotonicity then follow from the continuity of $(\rho,t)\mapsto T(\rho,t)[1]$. 
\end{proof}

\begin{remark}
{\rm
The solvability of \eqref{coercivity_of_bilinear_form} can also be seen from the corresponding bilinear form
\begin{equation*}
 \mathfrak{B}(p,\phi):=\int_{\Omega\setminus A_{\rho}}\nabla p\cdot\nabla\phi\,dx+\int_{\partial A_{\rho}}\left(H+\frac1Q\frac{\partial Q}{\partial\nu}+\mu\right)p\phi\,d\sigma, 
\end{equation*}
where we assume $\varphi_1\equiv0$ for simplicity (but the general case reduces to this case). 
Indeed, $\mathfrak{B}(p,\phi)$ is coercive in the Sobolev space $H^1_0(\Omega)$ (restricted to $\Omega\setminus A_{\rho}$) if $H+Q^{-1}\partial_\nu Q+\mu>0$. 
}
\end{remark}

Using the above notations, we can rewrite \eqref{def_of_phi} as
\begin{equation*}
 \Phi(\rho,t)=M_{|\theta_\rho^\ast(\nabla N_{\rho})|(\theta_\rho^\ast Q)^{-1}}T(\rho,t)\theta_\rho^\ast\partial_tQ. 
\end{equation*}
Note that the regularity of $\Phi$ depends on that of $Q$, namely, 
\begin{equation*}
 \Phi\in C^l(\mathcal{U}_{\varepsilon}(\rho_0)\times[0,\varepsilon),h^{2+\alpha}(\partial A_0))
\end{equation*}
if $Q\in h^{l+2+\alpha}(\overline{\Omega}\times[0,\infty))$, where $l\in\mathbb{N}$, $l=\infty$ or $l=\omega$. 

\begin{proof}[Proof of Proposition \ref{proposition-generation}]
\underline{\it Step 1.} 
In the following proof, we write $\partial$ for $\partial_{\rho}$ and $h^{k+\alpha}$ for $h^{k+\alpha}(\partial A_0)$ for brevity. 
Let us first derive a representation of $\partial\Phi(\rho,t)$. 
For this purpose, we recall that the mean curvature operator $H(\rho)$ has a quasilinear structure (see Escher and Simonett {\cite[Lemma 3.1]{escher_simonett-97_2}}) as
\begin{equation*}
 H(\rho)=P(\rho)\rho+K(\rho)
\end{equation*}
with $P\in C^{\omega}(\mathcal{V},\mathcal{L}(h^{3+\alpha},h^{1+\alpha}))$ and $K\in C^{\omega}(\mathcal{V},h^{1+\alpha})$, where
\begin{equation*}
 \mathcal{V}:=\{\rho\in h^{2+\alpha}(\partial A_0)\mid \|\rho\|_{C^1(\partial A_0)}<a/4\}. 
\end{equation*}
Hence, 
\begin{equation}
\label{linearized_b}
 \partial B(\rho,t)[\tilde{\rho}]v=\partial\left(B(\rho,t)v\right)[\tilde{\rho}]=\left(M_{v}P(\rho)\tilde{\rho}+L_1\tilde{\rho},0\right), 
\end{equation}
where the linear operator $L_1=L_1(\rho,t,v)$ is of lower-order in the sense that $L_1\in\mathcal{L}(h^{2+\alpha},h^{1+\alpha})$. 
By differentiating the identity
\begin{equation*}
 \left(L(\rho)T(\rho,t)\varphi, B(\rho,t)T(\rho,t)\varphi\right)=(0,\varphi,0) \quad \left(\varphi\in h^{1+\alpha}\right) 
\end{equation*}
with respect to $\rho\in\mathcal{U}_{\gamma}$ and using \eqref{linearized_b}, we see that
\begin{align*}
 L(\rho)\partial T(\rho,t)[\tilde{\rho}]\varphi&=-\partial L(\rho)[\tilde{\rho}]T(\rho,t)\varphi,\\
 B(\rho,t)\partial T(\rho,t)[\tilde{\rho}]\varphi&=-\partial B(\rho,t)[\tilde{\rho}]T(\rho,t)\varphi=\left(-M_{T(\rho,t)\varphi}P(\rho)\tilde{\rho}-L_1\tilde{\rho},0\right). 
\end{align*}
Hence, using the solution operators $S(\rho,t)$ and $T(\rho,t)$, we find the representation
\begin{align*}
 \partial T(\rho,t)[\tilde{\rho}]\varphi
 &=-S(\rho,t)\partial L(\rho)[\tilde{\rho}]T(\rho,t)\varphi-T(\rho,t)M_{T(\rho,t)\varphi}P(\rho)\tilde{\rho}-T(\rho,t)L_1\tilde{\rho}\\
 &=-T(\rho,t)M_{T(\rho,t)\varphi}P(\rho)\tilde{\rho}+L_2\tilde{\rho}
\end{align*}
for $\varphi=\theta_\rho^\ast q\in h^{1+\alpha}$ with a lower-order operator $L_2=L_2(\rho,t)\in\mathcal{L}(h^{2+\alpha},h^{2+\alpha})$. 
Therefore, we deduce that
\begin{equation*}
 \partial\Phi(\rho,t)[\tilde{\rho}]=-M_1(\rho,t)T(\rho,t)M_2(\rho,t)P(\rho)\tilde{\rho}+L_3\tilde{\rho}+L_4\tilde{\rho}+L_5\tilde{\rho}+L_6\tilde{\rho}, 
\end{equation*}
where
\begin{align*}
 &M_1(\rho,t):=M_{|\theta_\rho^\ast(\nabla N_{\rho})|(\theta_\rho^\ast Q)^{-1}} \in \mathcal{L}\left(h^{2+\alpha},h^{2+\alpha}\right),\\
 &M_2(\rho,t):=M_{T(\rho,t)\theta_\rho^\ast \partial_tQ}\in\mathcal{L}\left(h^{1+\alpha},h^{1+\alpha}\right),\\
 &L_3=L_3(\rho,t):=M_1(\rho,t)L_2(\rho,t)\in\mathcal{L}\left(h^{2+\alpha},h^{2+\alpha}\right),\\
 &L_4=L_4(\rho,t):=\partial M_{|\theta_\rho^\ast(\nabla N_{\rho})|}[\cdot]M_{(\theta_\rho^\ast Q)^{-1}}T(\rho,t)\theta_\rho^\ast \partial_tQ\in\mathcal{L}\left(h^{3+\alpha}, h^{2+\alpha}\right),\\
 &L_5=L_5(\rho,t):=M_{|\theta_\rho^\ast(\nabla N_{\rho})|}\partial M_{(\theta_\rho^\ast Q)^{-1}}[\cdot]T(\rho,t)\theta_\rho^\ast \partial_tQ\in\mathcal{L}\left(h^{2+\alpha},h^{2+\alpha}\right),\\
 &L_6=L_6(\rho,t):=M_1(\rho,t)T(\rho,t)\partial(\theta_\rho^\ast \partial_tQ)[\cdot] \in \mathcal{L}\left(h^{2+\alpha},h^{2+\alpha}\right). 
\end{align*}
Here, $L_3, L_5, L_6$ are lower-order operators as compared to the principal part
\begin{equation}
\label{pi-definition}
 \Pi(\rho,t):=-M_1(\rho_1,t)T(\rho,t)M_2(\rho,t)P(\rho) \in \mathcal{L}(h^{3+\alpha},h^{2+\alpha}). 
\end{equation}
Moreover, $L_4$ can also be regarded as a negligible perturbation if $\gamma$ is sufficiently small, since a direct computation shows that
\begin{equation*}
 \|L_4\tilde{\rho}\|_{h^{2+\alpha}}\leq \delta(\gamma)\|\tilde{\rho}\|_{h^{3+\alpha}}+C\|\tilde{\rho}\|_{C^{3}} \quad \left(\rho\in\mathcal{U}_{\gamma}\right)
\end{equation*}
holds with $\delta(\gamma)\to 0$ as $\gamma\to 0$. 
The required smallness of $\gamma$ apparently depends on $\rho_0$; but we can choose a new smooth reference domain $A_0$ arbitrarily close to $A_{\rho_0}$ in the $h^{3+\alpha}$ sense and thus $\gamma$ can be independently chosen. 
Since ${\rm Hol}(h^{3+\alpha},h^{2+\alpha})$ is open in $\mathcal{L}(h^{3+\alpha},h^{2+\alpha})$ and the operator norms of $L_3,L_4,L_5,L_6$ are arbitrarily small, it suffices to prove that
\begin{equation*}
 \Pi(\rho_0,0)\in {\rm Hol}(h^{3+\alpha},h^{2+\alpha}). 
\end{equation*}
Moreover, we may replace $T(\rho_0,0)$ in \eqref{pi-definition} by
\begin{equation*}
 \tilde{T}(\rho_0,0):=(L(\rho_0),B_\mu(\rho_0,0))^{-1}(0,\cdot,0)
\end{equation*}
with a large constant $\mu>0$ such that $H+Q^{-1}\partial_\nu Q+\mu>0$ on $\partial A_{\rho_0}$ (see \eqref{b_mu} for the definition of $B_\mu$). 
Indeed, $\tilde{T}(\rho_0,0)$ is well-defined and the elliptic regularity estimate applied to $(T(\rho_0,0)-\tilde{T}(\rho_0,0))\varphi=\mu T(\rho_0,0)\tilde{T}(\rho_0,0)\varphi$ yields
\begin{equation}
\label{difference_of_t}
\begin{aligned}
 \|(T(\rho_0,0)-\tilde{T}(\rho_0,0))\varphi\|_{h^{2+\alpha}}
 &\leq C\mu\|\tilde{T}(\rho_0,0)\varphi\|_{h^{1+\alpha}}\\
 &\leq C\mu\|\tilde{T}(\rho_0,0)\varphi\|_{h^{2+\beta}}\\
 &\leq C\mu\|\varphi\|_{h^{1+\beta}}
\end{aligned}
\end{equation}
with $0<\beta<\alpha<1$; and thus the difference
\begin{align*}
 &\Pi(\rho_0,0)+M_1(\rho_0,0)\tilde{T}(\rho_0,0)M_2(\rho_0,0)P(\rho_0)\\
 &\hspace{1cm}=M_1(\rho_0,0)\left(\tilde{T}(\rho_0,0)-T(\rho_0,0)\right)M_2(\rho_0,0)P(\rho_0)\in\mathcal{L}(h^{3+\beta},h^{2+\alpha})
\end{align*}
is of lower-order. 
We are now led to prove that
\begin{equation}
\label{tildepi_is_generator}
 \tilde{\Pi}(\rho_0,0):=-M_1(\rho_0,0)\tilde{T}(\rho_0,0)M_2(\rho_0,0)P(\rho_0)\in{\rm Hol}(h^{3+\alpha},h^{2+\alpha}). 
\end{equation}
Under the assumption that (A) or (B) holds (see Definition \ref{definition-elliptic}), we have
\begin{equation*}
 T(\rho_0,0)\theta_{\rho_0}^\ast \partial_tQ<0. 
\end{equation*}
Thus $M_1(\rho_0,0)$ and $-M_2(\rho_0,0)$ are multiplication operators with uniformly positive functions on $\partial A_0$. 

\underline{\it Step 2.} 
In order to prove \eqref{tildepi_is_generator}, it is sufficient to prove the resolvent estimate
\begin{equation}
\label{resolvent_estimate}
 |\lambda|\|\tilde{\rho}\|_{h^{2+\alpha}}+\|\tilde{\rho}\|_{h^{3+\alpha}}\leq C\|(\lambda+\tilde{\Pi}(\rho_0,0))\tilde{\rho}\|_{h^{2+\alpha}}
\end{equation}
for $\tilde{\rho}\in h^{3+\alpha}$ and $\lambda\in\{z\in\mathbb{C}\mid {\rm Re}\,z\geq\lambda_\ast\}$ for some $\lambda_\ast>0$. 
We begin with the analysis of a localized version of the operator $\tilde{\Pi}(\rho_0,0)$ at each point $\zeta\in\partial A_0$, i.e., 
\begin{equation*}
 \tilde{\Pi}(\rho_0,0,\zeta):=-m_1m_2\mathcal{T}_0(-\Delta_{\mathbb{R}^{n-1}}+1)\in\mathcal{L}(h^{3+\alpha}(\mathbb{R}^{n-1}),h^{2+\alpha}(\mathbb{R}^{n-1})), 
\end{equation*}
where $m_1:=M_1(\rho_0,0)(\zeta)>0$, $m_2:=M_2(\rho_0,0)(\zeta)<0$, $\Delta_{\mathbb{R}^{n-1}}$ is the $(n-1)$-dimensional Laplace operator, and $\mathcal{T}_0\in\mathcal{L}(h^{1+\alpha}(\mathbb{R}^{n-1}),h^{2+\alpha}(\mathbb{R}^{n-1}))$ maps $\varphi\in h^{1+\alpha}(\mathbb{R}^{n-1})$ to (the trace of) a unique solution $v=\mathcal{T}_0\varphi\in h^{2+\alpha}(\mathbb{R}^{n-1})$ to
\begin{equation*}
 \left\{
 \begin{aligned}
  -\Delta v&=0&&\textrm{in} \quad \mathbb{R}^n_+:=\{(x',x_n)\in\mathbb{R}^n \mid x_n>0\},\\
  -\frac{\partial v}{\partial n}+\mu_\zeta v&=\varphi&&\textrm{on} \quad \mathbb{R}^{n-1}=\partial\mathbb{R}^n_+, 
 \end{aligned}
 \right.
\end{equation*}
with a positive constant $\mu_\zeta:=H(\rho_0)(\zeta)+\theta_{\rho_0}^\ast(Q^{-1}\partial_\nu Q)(\zeta)+\mu$. 
Applying the (partial) Fourier transformation $\mathcal{F}$ on $\mathbb{R}^{n-1}$, we have
\begin{equation*}
 \left\{
 \begin{aligned}
  |\xi|^2\mathcal{F}v-\frac{\partial^2 \mathcal{F}v}{\partial n^2}&=0&&\textrm{for} \quad (\xi,x_n)\in\mathbb{R}^n_+,\\
  -\frac{\partial\mathcal{F}v}{\partial n}+\mu_\zeta\mathcal{F}v&=\mathcal{F}\varphi&&\textrm{for} \quad \xi\in\mathbb{R}^{n-1}. 
 \end{aligned}
 \right.
\end{equation*}
For fixed $\xi\in\mathbb{R}^{n-1}$, this is a second-order ordinary differential equation and one can easily obtain the explicit formula
\begin{equation*}
 \mathcal{F}v(\xi,0)=\frac{1}{|\xi|+\mu_\zeta}\mathcal{F}\varphi(\xi). 
\end{equation*}
Therefore, 
\begin{equation}
\label{multiplier}
 \tilde{\Pi}(\rho_0,0,\zeta)=-m_1m_2\mathcal{F}^{-1}\frac{|\xi|^2+1}{|\xi|+\mu_\zeta}\mathcal{F}
\end{equation}
with $-m_1m_2>0$. 
This representation combined with a Mikhlin-type multiplier theorem for the little H\"older spaces allows one to obtain
\begin{equation}
\label{local_estimate}
 |\lambda|\|\tilde{\rho}\|_{2+\alpha}+\|\tilde{\rho}\|_{3+\alpha}\leq C\|(\lambda+\tilde{\Pi}(\rho_0,0,\zeta))\tilde{\rho}\|_{2+\alpha}
\end{equation}
for $\tilde{\rho}\in h^{3+\alpha}(\mathbb{R}^{n-1})$ and $\lambda\in\{z\in\mathbb{C}\mid {\rm Re}\,z>0\}$, where $\|\cdot\|_{k+\alpha}:=\|\cdot\|_{h^{k+\alpha}(\mathbb{R}^{n-1})}$. 

\underline{\it Step 3.} 
The last step is to derive \eqref{resolvent_estimate} from the local estimate \eqref{local_estimate} by showing that $\tilde{\Pi}(\rho_0,0)$ can be indeed approximated by $\tilde{\Pi}(\rho_0,0,\zeta)$ in a small neighborhood $U_{\zeta}$ of $\zeta\in\partial A_0$. 
At this point, $\Delta$ in the definition of $\mathcal{T}_0$ and $\Delta_{\mathbb{R}^{n-1}}$ should have been replaced respectively by constant coefficient elliptic operators
\begin{equation*}
 \mathcal{L}_0:=\sum_{j,k=1}^n a_{jk}\frac{\partial^2}{\partial x_j\partial x_k}, \quad \mathcal{P}_0:=\sum_{j,k=1}^{n-1}p_{jk}\frac{\partial^2}{\partial x_j \partial x_k}
\end{equation*}
unless $\partial A_0$ is flat near $\zeta\in\partial A_0$; but still a similar representation as \eqref{multiplier} can be obtained by an algebraic consideration (see Onodera \cite[Section 3.4]{onodera-15} for the details). 
As in the proof of \cite[Lemma 6]{onodera-15}, we can prove that, for any $\varepsilon>0$, there exist an atlas $\{(U_{\zeta_l},\psi_l)\}_{l=1}^m$ of the tubular domain $\theta(\partial A_0,(-d,0])$ with small $d>0$ and an associated partition of unity $\{\phi_l\}_{l=1}^m$ such that ${\rm supp}\,\phi_l\subset U_{\zeta_l}$, $\bigcup_{l=1}^m\phi_l=1$ on $\theta(\partial A_0,(-d/2,0])$ and
\begin{equation}
\label{commutator_estimate}
 \left\|\psi_l^\ast M_{\phi_l}\tilde{\Pi}(\rho_0,0)\tilde{\rho}-\tilde{\Pi}(\rho_0,0,\zeta_l)\psi_l^\ast M_{\phi_l}\tilde{\rho}\right\|_{2+\alpha}\leq\varepsilon\left\|\psi_l^\ast M_{\phi_l}\tilde{\rho}\right\|_{3+\alpha}+C\|\tilde{\rho}\|_{h^{3+\beta}}
\end{equation}
holds for $\tilde{\rho}\in h^{3+\alpha}$, $1\leq l\leq m$ and $0<\beta<\alpha<1$. 
This commutator estimate is essentially based on Leibniz' rule and the smallness of $d>0$, where the latter improves the accuracy of the approximations by constant coefficient operators. 
The difference between our operator $\tilde{\Pi}(\rho_0,0)$ and $-W(\rho)$ studied in \cite{onodera-15} is the presence of the Dirichlet boundary condition on $\partial\Omega$ in the definition of $\tilde{T}(\rho_0,0)$. 
But, this does not affect the derivation of \eqref{commutator_estimate}, since it is the homogeneous condition $v|_{\partial\Omega}\equiv 0$. 
Now we combine \eqref{local_estimate} and \eqref{commutator_estimate} with small $\varepsilon>0$ to obtain
\begin{equation*}
 |\lambda|\|\psi_l^\ast M_{\phi_l}\tilde{\rho}\|_{2+\alpha}+\|\psi_l^\ast M_{\phi_l}\tilde{\rho}\|_{3+\alpha}
 \leq C\left(\|\psi_l^\ast M_{\phi_l}(\lambda+\tilde{\Pi}(\rho_0,0))\tilde{\rho}\|_{2+\alpha}+\|\tilde{\rho}\|_{h^{3+\beta}}\right)
\end{equation*}
for $\tilde{\rho}\in h^{3+\alpha}$, $\lambda\in\{z\in\mathbb{C}\mid {\rm Re}\,z>0\}$ and $1\leq l\leq m$. 
Since 
\begin{equation*}
 \tilde{\rho} \mapsto \max_{1\leq l \leq m}\|\psi_l^\ast M_{\phi_l}\tilde{\rho}\|_{k+\alpha}
\end{equation*}
defines an equivalent norm on $h^{k+\alpha}$, the above inequality implies 
\begin{equation*}
 |\lambda|\|\tilde{\rho}\|_{h^{2+\alpha}}+\|\tilde{\rho}\|_{h^{3+\alpha}}
 \leq C\left(\|(\lambda+\tilde{\Pi}(\rho_0,0))\tilde{\rho}\|_{h^{2+\alpha}}+\|\tilde{\rho}\|_{h^{3+\beta}}\right). 
\end{equation*}
Therefore, \eqref{resolvent_estimate} follows from this inequality and the interpolation inequality
\begin{equation*}
 \|\tilde{\rho}\|_{h^{3+\beta}}\leq\epsilon\|\tilde{\rho}\|_{h^{3+\alpha}}+C\|\tilde{\rho}\|_{h^{2+\alpha}}
\end{equation*}
by choosing a sufficiently large $\lambda_\ast>0$. 
\end{proof}

\bigskip

\noindent
{\bf Acknowledgments.}
The second author was supported in part by the Grant-in-Aid for Young Scientists (B) 16K17628, JSPS. 


\end{document}